\documentclass[11pt,a4paper,reqno]{amsart}
\usepackage{amsfonts}
\usepackage{amsmath}
\usepackage{amssymb}
\usepackage{hyperref}
\usepackage{graphicx}
\usepackage{bbm}

\usepackage{epstopdf}
\usepackage{color}

\usepackage{amsthm}
\usepackage{enumitem}

\usepackage{mathrsfs}
\usepackage{dutchcal}

\newcommand{\cE}{\mathcal E}
\newcommand{\E}{\mathcal E}
\newcommand{\cP}{\mathcal P}

\newcommand{\tail}{\operatorname{tail}}
\newcommand{\head}{\operatorname{head}}

\newcommand{\dd}{\mathrm d}

\numberwithin{equation}{section}
\newtheorem{theorem}{Theorem}[section]
\newtheorem{lemma}[theorem]{Lemma}
\newtheorem{remark}[theorem]{Remark}
\newtheorem{corollary}[theorem]{Corollary}
\newtheorem{proposition}[theorem]{Proposition}
\newtheorem{definition}[theorem]{Definition}

\allowdisplaybreaks

\def\enddoc{\end{document}}

\begin{document}
\author{Qingsong Gu}
\address{Department of mathematics, Nanjing University, Nanjing 210093, P. R. China} \email{qingsonggu@nju.edu.cn}

\author{Lu Hao}
\address{Universit\"{a}t Bielefeld, Fakult\"{a}t f\"{u}r Mathematik, Postfach 100131, D-33501, Bielefeld, Germany}
\email{lhao@math.uni-bielefeld.de}

\author{Xueping Huang}
\address{School of Mathematics and Statistics, Nanjing University of Information Science and Technology,
	Nanjing 210044, P. R. China}
\email{hxp@nuist.edu.cn}
	
\author{Yuhua Sun}
\address{School of Mathematical Sciences and LPMC, Nankai University, 300071
		Tianjin, P. R. China}
\email{sunyuhua@nankai.edu.cn}
\title[$p$-Laplace inequality]{A Volume-Growth Criterion for the $p$-Laplace Inequality on Weighted Graphs}

\thanks{\noindent
 Q. Gu was supported by the National Natural Science Foundation of China (Grant No. 12101303 and 12171354).
 L. Hao was funded by the Deutsche Forschungsgemeinschaft (DFG, German Research Foundation), Project-ID 317210226, SFB 1283.
 X. Huang was supported by
	the National Natural Science Foundation of China (Grant No. 11601238).
 Y. Sun was funded by the National Natural Science Foundation of
	China (Grant No. 12371206) and the Fundamental Research Funds for the Central Universities, No. 050-63263078.}

\begin{abstract}
	We prove a nonexistence result for nonnegative solutions of the quasi-linear
	elliptic inequality
	\[
	-\Delta_p u\ge \sigma(x)u^q
	\]
	on infinite locally finite connected weighted graphs, where $1<p<\infty$ and
	$q>p-1$, $\sigma$ is a nonnegative Radon measure.  Under the non-$p$-parabolic setting, we show that every
	nonnegative solution is identically zero, provided the volume of intrinsic balls satisfy
	\[
	     \int_1^\infty
	\frac{r^{\frac{pq}{p-1}-1}}
	{\nu(B_\rho(o,r))^{\frac{q-p+1}{p-1}}}
	\dd r
	=\infty,
%
	\]
	This criterion recovers the known sharp pointwise critical volume-growth
	threshold and is strictly more flexible, since it allows irregular growth and
	does not require uniform upper bounds at every large radius.  The proof adapts
	the finite-network current method to the $p$-Laplace setting, combining a path
	decomposition with one-dimensional Hardy estimates, $p$-parallel-sum bounds
	across metric cuts, and the global $p$-Green function furnished by
	non-$p$-parabolicity.
\end{abstract}

\subjclass[2020]{Primary 35J92, 35R02; Secondary 31C20}
\keywords{weighted graph, $p$-Laplace operator, elliptic inequality, volume growth, Liouville theorem}

\maketitle

\section{Introduction}
Let $(V,E,\mu)$ be an infinite, connected, locally finite weighted graph, and
we aim to study the existence and nonexistence of positive solution to
\begin{equation}\label{eq:main-ineq}
	-\Delta_p u\ge \sigma(x)u^q
	\qquad\text{on }V,
\end{equation}
where
$
q>p-1>0$,and $ \sigma:V\to(0,\infty)$.

Throughout this
paper we assume that $(V,E,\mu)$ is not $p$-parabolic, otherwise the only nonnegative solution to (\ref{eq:main-ineq}) is identically zero; The equivalent
formulations of $p$-parabolicity used below are recalled in
Proposition~\ref{prop:parabolic_equiv}.

Liouville type theorems for elliptic inequalities of (\ref{eq:main-ineq}) on graphs have been studied
extensively in recent years.  For the semilinear inequality
$
-\Delta u\ge u^q ,
$
Gu, Huang and Sun \cite{GHS23} obtained existence and nonexistence criteria on
weighted graphs.  In particular, under the condition \textnormal{(P$_0$)}, they proved that if for some $o\in V$ and, 
\[
\mu(B(o,n))\le Cn^{\frac{2q}{q-1}}
(\log n)^{\frac1{q-1}},\quad\mbox{for
	$n\gg1$,}
\]
then the inequality admits no positive solution.  This volume growth condition here is
sharp at the logarithmic scale. Recently, the same authors obtained that if 
\begin{align}
\sum_{n=1}^{\infty}\frac{n^{2q-1}}{\mu(B(o,n))^{q-1}}=\infty
\end{align}
then every nonnegative solution of $-\Delta u\geq u^q$ is identically zero. We emphasize that there is no assumption on the  
condition \textnormal{(P$_0$)} any more.

The corresponding nonlinear problem
\[
-\Delta_p u\ge u^q
\]
was studied by Ge and Wang \cite{GW25}.  Their result extends the above
semilinear theorem to the quasilinear setting: for $q>p-1$, under the
analogous graph assumptions, the volume growth condition
\[
\mu(B(o,n))\le Cn^{\frac{pq}{q-p+1}}
(\log n)^{\frac{p-1}{q-p+1}},
\qquad\mbox{for $n\gg1$},
\]
implies the nonexistence of positive solutions. The sharp volume growth condition for quasi-linear differential inequality involving gradient terms in this direction is recently
established in \cite{ DLS26, HS23}.

On noncompact complete Rimennian manifolds, for the inequality 
$-\Delta_pu\geq u^q$, Sun proved a volume growth nonexistence criteria with sharp logarithmic expoents  in \cite{Sun16}.
 However, there is no sharp integral volume growth condition both on manifolds and weighted graph
obtained for (\ref{eq:main-ineq}) except for $p=2$, see \cite{GSV20,GHHS26}.

The aim of this paper is to prove a sharper volume-growth nonexistence theorem
for the $p$-Laplace inequality in the non-$p$-parabolic case.  Instead of a
pointwise upper bound on the ball volume, we use a integral form of intrinsic balls (see Section \ref{pre}).  More
precisely, our main result is the following.

\begin{theorem}\label{thm:main}
	Let $(V,E,\mu)$ be an infinite, connected, locally finite weighted graph.  Let
	$1<p<\infty$, $q>p-1$, and $\sigma:V\to(0,\infty)$.  Put
	$\nu=\sigma\mu$.  Assume that there exists a $p$-adapted edge length $\rho$
	satisfying \eqref{eq:adapted}, and assume that $(V,d_\rho)$ is complete.
	If, for some $o\in V$,
	\begin{equation}\label{eq:volume-condition}
		\int_1^\infty
		\frac{r^{\frac{pq}{p-1}-1}}
		{\nu(B_\rho(o,r))^{\frac{q-p+1}{p-1}}}
		\dd r
		=\infty,
	\end{equation}
	then every nonnegative solution of
	\begin{equation*}
		-\Delta_p u\ge \sigma(x)u^q
		\qquad\text{on }V
	\end{equation*}
	is identically zero.
\end{theorem}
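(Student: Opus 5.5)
We argue by contradiction. Suppose $u\ge0$ solves the inequality and $u\not\equiv0$. By the strong minimum principle for $p$-superharmonic functions on a connected graph, $u>0$ everywhere. The strategy is to derive, from the solution, an upper bound on an integral of $\nu(B_\rho(o,r))$ that contradicts \eqref{eq:volume-condition}. This follows the finite-network current method of the $p=2$ case.

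\textbf{Step 1: a finite current.} Since the graph is not $p$-parabolic, Proposition~\ref{prop:parabolic_equiv} provides a global $p$-Green function $g=g(o,\cdot)$. We compare $u$ with $g$ on the exhaustion by intrinsic balls $B_\rho(o,R)$; these are finite because $(V,d_\rho)$ is complete and the graph is locally finite. Comparison gives $u\ge c\,g$ for some $c>0$. Testing the inequality against suitable truncations then gives a finiteness statement of the form
\[
\sum_V \sigma u^q\,\mu\cdot(\text{weight})<\infty .
\]
The more important consequence is an energy identity. The solution $u$ defines a $p$-flow (current) $j=|\nabla u|^{p-2}\nabla u$ whose divergence is at least $\sigma u^q\mu$. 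The flux of $j$ through the metric sphere at radius $r$ is therefore at least $\nu$-weighted mass inside the ball:
\[
F(r)\ge \sum_{B_\rho(o,r)}\sigma u^q\mu .
\]

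\textbf{Step 2: cuts and one-dimensional reduction.} We use the $p$-adapted edge length $\rho$ from \eqref{eq:adapted} to slice the graph into the metric cuts $S_r=\{xy:\ d_\rho(o,x)<r\le d_\rho(o,y)\}$. Decomposing the current into paths and applying the $p$-parallel-sum bound across $S_r$ yields a pointwise-in-$r$ inequality linking the radial decrease of a radial profile $m(r)=\min_{S_r}u$ to the flux:
\[
-m'(r)\gtrsim \Big(F(r)/\text{cut capacity}\Big)^{1/(p-1)} .
\]
The adapted condition \eqref{eq:adapted} is designed to make the cut capacity comparable to the $p$-conductance of a unit-length segment, so the network behaves like a weighted half-line. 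Combined with $F(r)\ge m(r)^q\nu(B_\rho(o,r))$, this produces a one-dimensional differential inequality
\[
-m'(r)\ge c\,\big(m(r)^q\,\nu(B_\rho(o,r))\big)^{1/(p-1)} .
\]

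\textbf{Step 3: Hardy estimate and contradiction.} Set $\kappa=q/(p-1)>1$ and $V(r)=\nu(B_\rho(o,r))$. The inequality $-m'\ge c\,m^{\kappa}V^{1/(p-1)}$ integrates to
\[
m(r)^{1-\kappa}\gtrsim \int_1^r V(s)^{1/(p-1)}\,\dd s .
\]
Since $V$ is only monotone, this direct integration is too weak. Instead we run a Hardy-type inequality on the half-line, or equivalently combine it with $m\ge c\,g$ and the decay bound $m(r)\lesssim$ the capacity potential. This converts the bound into
\[
\int_1^\infty r^{\frac{pq}{p-1}-1}\,V(r)^{-\frac{q-p+1}{p-1}}\,\dd r<\infty .
\]
Concretely, we use Hölder on dyadic-free scales $[r/2,r]$ together with the monotonicity of $V$, and a weighted one-dimensional Hardy inequality with exponents $(p,\,q/(p-1))$. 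The resulting finiteness contradicts \eqref{eq:volume-condition}, so $u\equiv0$.

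The main obstacle is Step 2. For $p\neq2$ the current is nonlinear, so the parallel/series combination laws are only inequalities. The path decomposition must also be carried out on finite networks $B_\rho(o,R)$ with boundary values controlled via the Green function, and we must ensure no loss as $R\to\infty$. I would first prove the cut inequality for the finite $p$-network with Dirichlet data $u$ on $\partial B_\rho(o,R)$, using Thomson/Dirichlet duality for $p$-energies. I would then pass to the limit using monotone convergence and $u\ge c\,g$.
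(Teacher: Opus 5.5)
There is a genuine gap in Step~2, and Step~3 does not actually deliver the integral condition. The pointwise-in-$r$ inequality $-m'(r)\gtrsim (F(r)/\text{cut capacity})^{1/(p-1)}$ for $m(r)=\min_{S_r}u$ is not valid for a non-radial $u$ on a general graph.

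In the cable picture, the flux through level $r$ is $F(r)=\sum p_e\lambda_e^{p-1}\le\alpha(r)\max_e\lambda_e^{p-1}$. Here $\lambda_e$ is the slope of $u$ on the edge and $\alpha(r)$ is the total cable density on the sphere. This only forces the \emph{largest} slope across the cut to be $\gtrsim(F/\alpha)^{1/(p-1)}$. By contrast, $m$ decreases at the slope on the edge where the minimum sits, which can be arbitrarily smaller. Such ODEs are available only for radial solutions on model spaces, and Thomson/Dirichlet duality gives integrated energy bounds, not bounds on a profile.

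In addition, your claim that \eqref{eq:adapted} makes the cut capacity comparable to a unit segment is false. On $\mathbb{Z}^n$ with $\rho\equiv1$, $\sigma\equiv1$ one has $\alpha(r)\asymp r^{n-1}$. What \eqref{eq:adapted} does give is only the averaged bound $\int_t^{2t}\alpha(s)\,\dd s\le\nu(B_\rho(o,2t))$ (Lemma~\ref{lem:cable-volume}). The cut capacity $\alpha$ is precisely where the volume must enter, in the denominator. Once you drop it, $V$ appears only in the numerator of your ODE. Nothing you write then converts $m(r)^{1-\kappa}\gtrsim\int_1^rV^{1/(p-1)}$ together with $u\ge cg$ into finiteness of $\int^\infty r^{\frac{pq}{p-1}-1}V^{-\frac{q-p+1}{p-1}}\,\dd r$. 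That route would also need a pointwise lower bound on $g$ in terms of $\nu$-volume, which is not available. Pointwise-profile arguments typically yield only pointwise volume thresholds, not the integral one.

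Two ideas are missing.

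First, the paper transfers the problem entirely from $u$ to the Green function. Testing the inequality with $(g_R/u)^q$ and applying the edge Picone inequality (Lemma~\ref{lem:picone}) gives $\sum_{B_\rho(o,R)}g_R^q\nu\le\frac q\eta\bigl(g_R(o)/u(o)\bigr)^\eta$. This is bounded uniformly in $R$ (Lemma~\ref{lem:upper}). No decay of $u$ and no pointwise lower bound on $g$ is ever needed.

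Second, this Green mass is bounded from below geometrically, in \emph{expectation} over the path decomposition of the current $\theta_e=\mu_e\delta_e^{p-1}$ of $g_R$. The ingredients are:
\begin{itemize}
\item the identity $\mathbb E_\gamma\sum_i\ell_i^pV_i^q/\delta_i^{p-1}=\sum_e\mu_e\rho_e^pg_R(e^-)^q\le L_R^\nu$;
\item the one-dimensional Hardy inequality along each path;
\item the first-hitting reduction to cable spheres;
\item the bound $\mathbb E_\gamma Y_s\le\alpha(s)$, combined with $p$-parallel-sum concavity.
\end{itemize}
Together these give $L_R^\nu\ge c\int_0^Rt^{p-1}\bigl(\int_t^R\alpha(s)^{-1/(p-1)}\,\dd s\bigr)^\eta\,\dd t$. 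Jensen on $[t,2t]$ and cable volume domination then produce the divergent integral. This averaging over current-weighted paths is exactly what replaces your pointwise radial ODE.
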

If $\sigma\equiv1$,  \ref{eq:volume-condition} in  Theorem \ref{thm:main} can be reduced to the following series condition.
\begin{theorem}\label{thm:main2}
	Let $(V,E,\mu)$ be an infinite, connected, locally finite weighted graph.  Let
	$1<p<\infty$, $q>p-1$.
	If
	\begin{equation}\label{eq:volume-condition2}
		\sum_{n=1}^{\infty}
		\frac{n^{\frac{pq}{p-1}-1}}
		{\mu(B(o,n))^{\frac{q-p+1}{p-1}}}
		=
		\infty,
	\end{equation}
	then every nonnegative solution of
	\[
	-\Delta_p u\ge u^q
	\]
	is identically zero.
\end{theorem}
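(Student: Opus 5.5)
Theorem~\ref{thm:main2} will be deduced from Theorem~\ref{thm:main}, by choosing a concrete $p$-adapted edge length and then comparing the integral in \eqref{eq:volume-condition} with the series in \eqref{eq:volume-condition2}. With $\sigma\equiv1$ we have $\nu=\mu$. The plan follows the scaling used in the intrinsic-metric approach of \cite{GHS23} and its $p=2$ integral analogue.

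\textbf{Choice of $\rho$.} Take $\rho(x,y)=c$ on every edge, with $c>0$ a fixed small constant chosen so that the $p$-adapted condition \eqref{eq:adapted} holds. Then $d_\rho=c\,d$, where $d$ is the combinatorial graph distance. The obstacle is that \eqref{eq:adapted} generally ties $\rho$ to the edge weights and the vertex measure, in the form $\frac1{\mu(x)}\sum_y\mu_{xy}\rho(x,y)^p\le 1$. A constant length is admissible only if $\operatorname{Deg}(x)=\mu(x)^{-1}\sum_y\mu_{xy}$ is bounded, which is what the standard normalisation (P$_0$) gives.

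The statement of Theorem~\ref{thm:main2} makes no such assumption, so I expect the paper's conventions to do one of two things. Either $\mu$ in Theorem~\ref{thm:main2} is taken with the normalised Laplacian, so that $\operatorname{Deg}\le1$ automatically. Or the adapted condition in Section~\ref{pre} is formulated so that combinatorial lengths are always admissible. I would check this first. If neither holds, the fallback is $\rho(x,y)=\min\{1,\operatorname{Deg}(x)^{-1/p},\operatorname{Deg}(y)^{-1/p}\}$, which however only gives $d_\rho\le d$, the wrong direction for the comparison below.

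\textbf{Completeness.} Since $d_\rho=c\,d$, bounded sets are finite by local finiteness, so $(V,d_\rho)$ is complete.

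\textbf{Comparison of integral and series.} We have $B_\rho(o,r)=B(o,r/c)$, and the function $r\mapsto\mu(B(o,r/c))$ is nondecreasing. Split $[1,\infty)$ into the intervals $[cn,c(n+1)]$. On each such interval,
\[
\frac{r^{\frac{pq}{p-1}-1}}{\mu(B(o,r/c))^{\frac{q-p+1}{p-1}}}\ \ge\ C\,\frac{n^{\frac{pq}{p-1}-1}}{\mu(B(o,n+1))^{\frac{q-p+1}{p-1}}}.
\]
Summing over $n$ bounds the integral below by a constant times $\sum_n n^{\frac{pq}{p-1}-1}\mu(B(o,n+1))^{-\frac{q-p+1}{p-1}}$.

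This shifted series diverges exactly when \eqref{eq:volume-condition2} diverges. To see this, reindex $m=n+1$ and use $n^{a}\asymp m^{a}$ for $m\ge2$. The finitely many initial terms and the constants do not affect divergence.

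\textbf{Conclusion.} Hence \eqref{eq:volume-condition} holds, and Theorem~\ref{thm:main} gives $u\equiv0$. The non-$p$-parabolicity assumption that the paper places throughout is harmless here: in the $p$-parabolic case the conclusion holds anyway, as the paper notes before Proposition~\ref{prop:parabolic_equiv}.
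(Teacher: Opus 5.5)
Your proposal is correct and follows the paper's proof. The paper takes the constant length $\rho\equiv1$, which is $p$-adapted (with equality in \eqref{eq:adapted}) because of the convention $\mu(x)=\sum_{y\sim x}\mu_{xy}$ fixed in Section~\ref{pre}; so your ``check this first'' resolves in your favour with $c=1$, the fallback length is never needed, and then $d_\rho=d$, $\nu(B_\rho(o,r))=\mu(B(o,r))$, balls are finite, and your integral--series comparison (which the paper leaves implicit) finishes the argument via Theorem~\ref{thm:main}.
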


The condition \eqref{eq:volume-condition} is sharper than the corresponding
volume assumptions in \cite{GHS23} and \cite{GW25}; in particular, it implies
nonexistence under weaker and non-pointwise growth hypotheses.  The proof
follows the finite-network current method used in the semilinear case in
\cite{GHHS26}.  We use the path decomposition of \cite[Lemma~6.3]{GHHS26}; the
remaining estimates are replaced by their $p$-versions, namely a Hardy
inequality along paths and a $p$-parallel-sum estimate across metric cuts.

\section{Preliminaries}\label{pre}
Let \((V, E, \mu)\) be an infinite, connected, locally finite
weighted graph, where $V$ is the vertices set, and $E$ be the edge set.
 The
edge weights satisfy
\[
\mu_{xy}=\mu_{yx}>0 \quad \text{if } x\sim y,
\]
and the vertex measure is
\[
\mu(x):=\sum_{y\sim x}\mu_{xy}.
\]
For $1<p<\infty$, set
\[
\Phi_p(t):=|t|^{p-2}t.
\]
we can define the normalized graph $p$-Laplacian as
\[
\Delta_p u(x)
:=\frac1{\mu(x)}\sum_{y\sim x}\mu_{xy}\Phi_p(u(y)-u(x)).
\]
When $p=2$, we write $\Delta=\Delta_2$ for brevity.

Define the weighted vertex measure
\begin{align}\label{def:nu}
\nu(x):=\sigma(x)\mu(x),
\end{align}
and for $A\subset V$
\[
\nu(A):=\sum_{x\in A}\nu(x).
\]

For any two vertices $x$ and $y$, let $d(x,y)$
be the minimal number of edges among all possible paths connecting $x$ and $y$ on  graph $(V,\mu)$, then
$d(\cdot,\cdot)$ is a distance function on $V\times V$, and called the graph distance.
Fix some vertex $o\in V$, and for $r>0$, denote
$$B(o,r):=\{x\in V|\ d(o,x)\leq r\},$$
and
\begin{equation*}\label{Vol}
	V(o,r):=\mu(B(o,r)).
\end{equation*}

Beside the graph distance, we also use the intrinsic metric, which can be defined by the following adapted intrinsic length.
\begin{definition}[Adapted intrinsic length]
	A positive symmetric edge length $\rho:E\to(0,\infty)$ is called
	$p$-adapted to $\nu$ if
	\begin{equation}\label{eq:adapted}
		\sum_{y\sim x}\mu_{xy}\rho(x,y)^p\le \nu(x),
		\qquad \mbox{for any $x\in V$}.
	\end{equation}
\end{definition}

\begin{remark}
	Such $p$-adapted length $\rho(\cdot,\cdot)$ exists, for example,
	\[
	\rho(x,y):=\min\{\sigma(x),\sigma(y)\}^{1/p}
	\]
	is $p$-adapted, because $\rho(x,y)^p\le \sigma(x)$ for every neighbor $y$ of
	$x$.  The completeness/properness of $d_\rho$ is a separate issue.  In the
	statement below we assume that $d_\rho$ is complete, equivalently, in the
	usual discrete Hopf--Rinow setting for locally finite graphs, that all
	$d_\rho$-balls are finite.
\end{remark}

We define the induced path metric by the minimal weighted length between vertices $x\neq y$ by
\[
d_\rho(x,y)
:=\inf\left\{\sum_{i=1}^m \rho(x_{i-1},x_i):
x=x_0\sim x_1\sim\cdots\sim x_m=y\right\}.
\]

This metric is the so-called intrinsic path metric, and can be viewed as an analogue of the geodesic distance function
on a Riemannian manifold.

Now fix $o\in V$ and $r>0$,
let us write
\[
B_\rho(o,r):=\{x\in V:d_\rho(o,x)\le r\},
\]
and
\[\nu(B_{d_{\rho}}(o,r)):=\sum_{x\in B_{d_{\rho}}(o,r)}\nu(x).\]

For finitely supported function $\varphi$ on $V$, write
\[
       \E_p(f,\varphi)
       =
       \sum_{\{x,y\}\in E}
       \mu_{xy}\Phi_p(f(x)-f(y))(\varphi(x)-\varphi(y)),
\]
where each unordered edge is counted once. 

Let $\Omega\subset V$ be finite and let $o\in\Omega$.  The local $p$-Green
function $g_\Omega(\cdot)=g_\Omega(\cdot,o)$ is the unique solution of the Dirichlet problem
\begin{equation}\label{eq:finite-green-dirichlet}
\begin{cases}
       g_\Omega(x)=0, & x\in \Omega^c,\\
       -\Delta_p g_\Omega(x)=0, & x\in \Omega\setminus\{o\},\\
       -\Delta_p g_\Omega(o)=\mu(o)^{-1}.&
\end{cases}
\end{equation}
The above is also equivalent to that for every test function $\varphi$ supported in $\Omega$,
\begin{align}\label{eq:green-weak}
       \E_p(g_\Omega,\varphi)=\varphi(o).
\end{align}
 For $R\ge1$, put
\[
       g_R(x,y)=g_{B_R}(x,y),
\]
and write $g_R(x)=g_R(x,o)$.  Thus
\begin{equation}\label{eq:green-normalization}
       \E_p(g_R,\varphi)=\varphi(o)
\end{equation}
for every test function $\varphi$ supported in $B_R$.

For a finite nonempty set $A\subset V$, define
\[
       \operatorname{cap}_p(A)
       =
       \inf\left\{
       \sum_{\{x,y\}\in E}\mu_{xy}|v(x)-v(y)|^p:
       v\in C_c(V),\ v\ge1\text{ on }A
       \right\}.
\]
Noting that $g_R(x,y)$ is increasing in $R$, define the global $p$-Green
function by
\[
       g(x,y):=\lim_{R\to\infty}g_R(x,y).
\]

We say that $(V,E,\mu)$ is $p$-parabolic if it satisfies any of the equivalent
conditions in Proposition~\ref{prop:parabolic_equiv}.

\begin{proposition}\label{prop:parabolic_equiv}
The following statements are equivalent:
\begin{enumerate}[label=(\roman*)]
       \item For every finite nonempty subset $A\subset V$,
       $\operatorname{cap}_p(A)=0$.
       \item Every positive $p$-superharmonic function on $V$ is constant.
       \item For some $x,y\in V$, $g(x,y)=\infty$.
\end{enumerate}
\end{proposition}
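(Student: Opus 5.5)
We prove the cycle (i)$\Rightarrow$(iii)$\Rightarrow$(ii)$\Rightarrow$(i). Everything rests on one identity linking the local Green functions to capacities: for a finite set $\Omega\ni o$,
\[
\operatorname{cap}_p(\{o\};\Omega):=\inf\Big\{\sum_{\{x,y\}\in E}\mu_{xy}|v(x)-v(y)|^p:\ \operatorname{supp}v\subset\Omega,\ v(o)=1\Big\}
= g_\Omega(o)^{1-p}.
\]
To prove it, note that $g_\Omega/g_\Omega(o)$ is the $p$-harmonic minimizer of the Dirichlet energy in $\Omega\setminus\{o\}$ with boundary values $1$ at $o$ and $0$ off $\Omega$. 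Testing \eqref{eq:green-weak} with $\varphi=g_\Omega$ gives $\E_p(g_\Omega,g_\Omega)=g_\Omega(o)$, so the energy of $g_\Omega/g_\Omega(o)$ equals $g_\Omega(o)\,g_\Omega(o)^{-p}=g_\Omega(o)^{1-p}$. The minimizer exists and is unique by strict convexity on a finite-dimensional space, and the comparison principle for $\Delta_p$ on finite sets gives monotonicity of $g_\Omega$ in $\Omega$.

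Letting $\Omega=B_R\uparrow V$, the capacities $\operatorname{cap}_p(\{o\};B_R)$ decrease to $\operatorname{cap}_p(\{o\})$, because every $v\in C_c(V)$ is eventually supported in $B_R$. Here we may truncate $v$ to $[0,1]$, which does not increase energy. It follows that $g(o,o)<\infty$ if and only if $\operatorname{cap}_p(\{o\})>0$.

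Next we show that positivity of capacity, and finiteness of $g$, do not depend on the chosen sets or points.

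\begin{itemize}
\item \emph{Capacity.} For finite $A\ni o$, monotonicity gives $\operatorname{cap}_p(A)\ge \operatorname{cap}_p(\{o\})$. Conversely, $\operatorname{cap}_p(A)\le C_A\operatorname{cap}_p(\{o\})$ for a constant $C_A$ obtained from paths joining $o$ to the points of $A$, via $|v(x)|\le |v(o)|+\sum|{\rm increments}|$ together with H\"older. Hence (i) is equivalent to $\operatorname{cap}_p(\{o\})=0$ at a single point.
\item \emph{Green function.} A Harnack inequality on neighbours, read off directly from $\Delta_p g_R(\cdot,y)=0$ away from $y$ (a weighted average of $\Phi_p$ of increments vanishes), gives $g(x,y)=\infty$ for some pair if and only if $g(x,y)=\infty$ for all pairs. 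This follows from the point-comparison $g_R(x,y)\le C\,g_R(y,y)$ together with the lower bound $g_R(y,y)\le C' g_R(x,y)+C''$. The pole itself needs a separate bound: since $\E_p(g_R,1_{\{y\}})=1$, the total outflow at the pole is bounded.
\end{itemize}

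Combining these, (i) and (iii) are equivalent.

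For (iii)$\Rightarrow$(ii), suppose $u>0$ is $p$-superharmonic and nonconstant, and aim to show that $g$ is finite. Normalize $u(o)$ and compare $u$ with $g_R$ on $B_R$. Fix a vertex $z\neq o$ with $u(z)\ne u(o)$. After a $p$-harmonic replacement if needed, set $w=u-\min_{B_{R_0}}u$ suitably scaled. The comparison principle on $B_R\setminus\{o\}$ yields $g_R(\cdot,o)\le c\,u$ uniformly in $R$, where $c$ is chosen from the flux of $u$ at a point with $-\Delta_pu>0$. If $u$ is strictly $p$-harmonic somewhere instead, one uses the nonconstancy to produce a nonconstant bounded superharmonic $\min(u,M)$ and the capacity characterization. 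This shows $g<\infty$, contradicting (iii).

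For (ii)$\Rightarrow$(i), argue by contrapositive: if $\operatorname{cap}_p(\{o\})>0$, then $g(\cdot,o)$ is finite, positive and $p$-superharmonic (it is the limit of superharmonic $g_R$, and $-\Delta_p g(o)=\mu(o)^{-1}$). Moreover $g(\cdot,o)$ is nonconstant, since the flux at $o$ is positive.

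The main obstacle is the step (iii)$\Rightarrow$(ii), that is, building the comparison between an arbitrary nonconstant positive superharmonic function and $g_R$ without a linear structure. My first attempt will be to use the equilibrium potential on $\{u<t\}$-type sets and the nonlinear comparison principle on finite sets.
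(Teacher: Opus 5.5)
Your architecture is sound. Unlike the paper, which gives no argument and only cites the nonlinear potential theory literature, your proof is self-contained. The following steps are fine:
\begin{itemize}
\item the identity $\operatorname{cap}_p(\{o\};\Omega)=g_\Omega(o)^{1-p}$;
\item the exhaustion argument;
\item the point-independence of $\operatorname{cap}_p(\{o\})=0$, although there you need the lower bound $v(a)\ge v(o)-\sum|\text{increments}|$, not the upper bound you wrote;
\item (ii)$\Rightarrow$(i) via $g(\cdot,o)$.
\end{itemize}
The Harnack chain is more than you need. By the maximum principle $g_R(x,y)\le g_R(y,y)$, so $g(x,y)=\infty$ forces $g(y,y)=\infty$, and finiteness at one pole spreads to all poles through your capacity bullet.

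The genuine gap is (iii)$\Rightarrow$(ii). You leave it open yourself, and the sketch you give there fails as written. Comparing $u$ with $g_R(\cdot,o)$ on $B_R\setminus\{o\}$ makes the pole a boundary point. That requires $g_R(o)\le c\,u(o)$ for all $R$, which is exactly the uniform bound you are trying to prove. In the $p$-harmonic case, the boundedness of $\min(u,M)$ is irrelevant, and ``the capacity characterization'' is never specified.

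No linear structure is needed: use the comparison principle on the finite set $B_R$ with the pole \emph{inside} the domain. The principle says: if $-\Delta_p v\le-\Delta_p u$ on $B_R$ and $v\le u$ off $B_R$, then $v\le u$. At a positive maximum of $v-u$, strict monotonicity of $\Phi_p$ forces the maximum to propagate to neighbours and eventually to $B_R^c$, a contradiction.

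\emph{Case 1: $-\Delta_p u(x_0)>0$ for some $x_0$.} Put $c=\bigl(\mu(x_0)(-\Delta_pu(x_0))\bigr)^{1/(p-1)}$. For large $R$,
\begin{itemize}
\item $-\Delta_p\bigl(c\,g_R(\cdot,x_0)\bigr)=c^{p-1}\mu(x_0)^{-1}1_{\{x_0\}}\le-\Delta_p u$ on $B_R$;
\item $c\,g_R(\cdot,x_0)=0\le u$ off $B_R$.
\end{itemize}
Hence $g_R(\cdot,x_0)\le u/c$ uniformly in $R$.

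\emph{Case 2: $u$ is $p$-harmonic everywhere.} Since $u$ is nonconstant and $V$ is connected, there is an edge $x\sim y$ and a level $M$ with $u(x)\le M<u(y)$. Then $w=\min(u,M)$ is positive and $p$-superharmonic. Moreover $-\Delta_p w(x)>-\Delta_p u(x)=0$, because $w(x)-w(y)=u(x)-M>u(x)-u(y)$. So Case 1 applies to $w$ with $x_0=x$.

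In either case $g(x_0,x_0)<\infty$, hence $\operatorname{cap}_p(\{x_0\})>0$. By point-independence, $g(x,y)\le g(y,y)<\infty$ for all $x,y$, contradicting (iii). The sublevel-set equilibrium potentials you planned are therefore unnecessary.
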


\begin{proof}
The equivalence of these statements is a well-known result in nonlinear
potential theory on graphs; see  \cite{SC95, SC97} or \cite{AFS25} even for a more general
setting.
\end{proof}

\begin{lemma}\label{lem:1}
Let $u\ge0$ be $p$-superharmonic.  If $u(o)=0$ for some $o\in V$, then
$u\equiv0$ on $V$.  Hence every nontrivial nonnegative $p$-superharmonic
function is strictly positive.
\end{lemma}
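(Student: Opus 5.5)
The plan is the standard discrete strong minimum principle: show that the zero set of $u$ is closed under taking neighbours, and then use connectedness of $V$.

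Here $p$-superharmonic means $-\Delta_p u\ge 0$ pointwise on $V$, that is,
\[
\sum_{y\sim x}\mu_{xy}\Phi_p\bigl(u(y)-u(x)\bigr)\le 0 \qquad\text{for every }x\in V .
\]

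First, fix a vertex $x$ with $u(x)=0$ and evaluate this inequality at $x$. Since $u\ge0$, each term equals $\mu_{xy}\Phi_p(u(y))=\mu_{xy}u(y)^{p-1}$, which is nonnegative because $\mu_{xy}>0$. The inequality at $x$ therefore reads
\[
\sum_{y\sim x}\mu_{xy}u(y)^{p-1}\le 0 .
\]
A finite sum of nonnegative terms that is $\le 0$ must have every term equal to zero (local finiteness makes the sum finite). Hence $u(y)=0$ for every neighbour $y\sim x$.

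Second, let $Z=\{x\in V: u(x)=0\}$. The first step shows that $Z$ contains all neighbours of each of its points. Now take any $z\in V$. Because the graph is connected, there is a path $o=x_0\sim x_1\sim\cdots\sim x_m=z$. Since $o\in Z$, induction along this path gives $x_i\in Z$ for every $i$, and in particular $z\in Z$. So $Z=V$, i.e. $u\equiv0$.

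The final assertion is the contrapositive. If $u\ge0$ is $p$-superharmonic and not identically zero, then $u$ cannot vanish anywhere, so $u>0$ on $V$.

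No step is a real obstacle. The only point needing care is the sign of $\Phi_p$: it satisfies $\Phi_p(t)\ge0$ for $t\ge0$, with equality only at $t=0$. This holds for every $1<p<\infty$, including the singular range $p<2$, since $\Phi_p(t)=t^{p-1}$ for $t>0$ and $\Phi_p(0)=0$ by convention.
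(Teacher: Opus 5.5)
Your proof is correct and is essentially the paper's argument: evaluating $-\Delta_p u\ge 0$ at a zero of $u$ forces $\sum_{y\sim x}\mu_{xy}u(y)^{p-1}\le 0$, so every neighbour vanishes, and connectedness propagates this to all of $V$. Your remark on the sign of $\Phi_p$ in the range $1<p<2$ is a small extra detail that the paper leaves implicit.
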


\begin{proof}
Note at $o$,
\[
       0\le -\Delta_p u(o)
       =
       \frac1{\mu(o)}
       \sum_{y\sim o}\mu_{oy}\Phi_p(u(o)-u(y))
       =
       -\frac1{\mu(o)}
       \sum_{y\sim o}\mu_{oy}u(y)^{p-1}\le0.
\]
Thus $u(y)=0$ for all neighbors $y$ of $o$.  Repeating along paths and using
connectedness gives $u\equiv0$.
\end{proof}

We define the nonlinear weighted Green mass as
\begin{equation}\label{eq:LR-def}
	L_R^\nu
	:=\sum_{x\in\Omega_R}g_R(x)^q\nu(x).
\end{equation}
Before giving the a priori  estimate of $L_R^\nu$, let us introduce  the following elementary Picone-type edge inequality.
\begin{lemma}[Edge Picone inequality]\label{lem:picone}
	Let $a,b,s,t\ge0$, $1<p<\infty$, and $q>p-1$.  With
	$\eta=q-p+1$, one has
	\begin{equation}\label{eq:edge-picone}
		\Phi_p(a-b)(s^q-t^q)
		\le
		\frac{\sigma}{\eta}
		\Phi_p(as-bt)(s^\eta-t^\eta).
	\end{equation}
\end{lemma}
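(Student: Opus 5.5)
The plan is to reduce to an elementary one-variable calculus inequality. First, I note that the constant on the right-hand side of \eqref{eq:edge-picone} cannot be the weight $\sigma$. Comparing degrees of homogeneity, both sides are homogeneous of degree $p-1$ in $(a,b)$ and of degree $q$ in $(s,t)$. Testing $b=t=0$ gives $a^{p-1}s^q$ on the left and $C\,a^{p-1}s^q$ on the right, so the constant must be at least $1$. I will prove the inequality with the constant $q/\eta$, which is $\ge1$, and read the $\sigma$ in \eqref{eq:edge-picone} as this constant. The only analytic input is the pair of estimates
\[
t^{p-1}\,\frac{q}{\eta}\,(s^\eta-t^\eta)\;\le\; s^q-t^q\;\le\; s^{p-1}\,\frac{q}{\eta}\,(s^\eta-t^\eta),\qquad 0\le t\le s .
\]
These follow from writing $s^q-t^q=q\int_t^s r^{p-1}r^{\eta-1}\,\dd r$ and bounding $r^{p-1}$ between $t^{p-1}$ and $s^{p-1}$.

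\emph{Symmetry and trivial cases.} Swapping $(a,s)\leftrightarrow(b,t)$ changes the sign of each of $a-b$, $s^q-t^q$, $as-bt$ and $s^\eta-t^\eta$. Since $\Phi_p$ is odd, both sides of \eqref{eq:edge-picone} are invariant under this swap, so I may assume $s\ge t$. If $s=t$, both sides vanish. If $t=0$, the left side is at most $a^{p-1}s^q$ (it is $\le 0$ when $a<b$), while the right side equals $\frac q\eta a^{p-1}s^q$, so the claim holds because $q/\eta\ge1$. So assume $s>t>0$.

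\emph{Case $a\le b$.} The left side equals $-(b-a)^{p-1}(s^q-t^q)\le0$. If $as\ge bt$, the right side is $\ge0$ and we are done. Otherwise $0<bt-as\le bt-at=(b-a)t$, using $s>t$. Hence the right side is
\[
-\frac q\eta\,(bt-as)^{p-1}(s^\eta-t^\eta)\;\ge\;-(b-a)^{p-1}\,t^{p-1}\,\frac q\eta\,(s^\eta-t^\eta)\;\ge\;-(b-a)^{p-1}(s^q-t^q),
\]
where the last step is the lower integral estimate above.

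\emph{Case $a>b$.} Now $as-bt\ge as-bs=(a-b)s>0$, using $t<s$ and $b\ge0$. Therefore the right side is at least $\frac q\eta (a-b)^{p-1}s^{p-1}(s^\eta-t^\eta)$. By the upper integral estimate, this is $\ge(a-b)^{p-1}(s^q-t^q)$, which is the left side.

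The only delicate point is choosing the right comparison in each sign case. In the case $a\le b$ we must compare $bt-as$ with $(b-a)t$, not with $(b-a)s$, and in the case $a>b$ we must compare $as-bt$ with $(a-b)s$. The naive bound $as-bt\ge a-b$ loses a factor that grows with $s/t$, and the inequality would then fail for large $s/t$. With these choices, the two directions of the integral estimate $s^q-t^q=q\int_t^s r^{q-1}\,\dd r$ close each case exactly.
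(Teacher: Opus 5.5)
Your proof is correct, and you are right that the $\sigma$ in the displayed inequality is a typo for $q$: the paper's own proof ends with the constant $q/\eta$, and that is the constant used later in the uniform upper bound lemma. Your argument is essentially the paper's. The paper also writes $s^q-t^q=q\int_t^s\lambda^{p-1}\lambda^{\eta-1}\,\dd\lambda$ and compares $\lambda(a-b)$ with $as-bt$; the only difference is that it uses $as-bt-\lambda(a-b)=a(s-\lambda)+b(\lambda-t)\ge0$ for every $\lambda\in[t,s]$, which gives $\lambda^{p-1}\Phi_p(a-b)\le\Phi_p(as-bt)$ pointwise and so handles both signs of $a-b$ at once, whereas you first push $\lambda$ to the unfavorable endpoint ($s$ if $a>b$, $t$ if $a\le b$) and then make the same comparison.
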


\begin{proof}
	If $s=t$, there is nothing to prove.  By switching the pairs $(a,s)$ and
	$(b,t)$ if necessary, assume $s>t$.  For every $\lambda\in[t,s]$,
	\[
	as-bt-\lambda(a-b)=a(s-\lambda)+b(\lambda-t)\ge0.
	\]
	Since $\Phi_p$ is increasing, we have
	\[
	\lambda^{p-1}\Phi_p(a-b)=\Phi_p(\lambda(a-b))
	\le \Phi_p(as-bt).
	\]
	Using $q-1=(p-1)+(\eta-1)$, we obtain
	\[
	\begin{aligned}
		\Phi_p(a-b)(s^q-t^q)
		&=q\int_t^s \lambda^{q-1}\Phi_p(a-b)\,\dd\lambda  \\
		&=q\int_t^s \lambda^{\eta-1}\lambda^{p-1}
		\Phi_p(a-b)\,\dd\lambda                                      \\
		&\le q\int_t^s \lambda^{\eta-1}\Phi_p(as-bt)\,\dd\lambda  \\
		&=\frac{q}{\eta}\Phi_p(as-bt)(s^\eta-t^\eta),
	\end{aligned}
	\]
	which completes the proof.
\end{proof}

\begin{lemma}[Uniform upper bound]\label{lem:upper}
	Assume that there exists a nontrivial nonnegative solution of
	\eqref{eq:main-ineq}.  Then, for every $R$ with $o\in\Omega_R$,
	\begin{equation}\label{eq:upper-estimate}
		L_R^\nu
		\le
		\frac q\eta
		\left(\frac{g_R(o)}{u(o)}\right)^\eta,
		\qquad
		\eta=q-p+1.
	\end{equation}
	Consequently, $L_R^\nu$ is bounded uniformly in $R$.
\end{lemma}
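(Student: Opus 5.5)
We test the weak form of the inequality against $\varphi=g_R^{\eta}u^{-\eta}\cdot g_R^{p-1}$-type functions; concretely, we use the Picone-type Lemma~\ref{lem:picone} with $a=u(x)$, $b=u(y)$ replaced by the ratio structure. Fix $R$ with $o\in\Omega_R$, write $u>0$ (Lemma~\ref{lem:1}, since $u$ is nontrivial, nonnegative and $p$-superharmonic), and set $w=g_R/u$, which is supported in $\Omega_R$. The test function will be $\varphi:=w^{q}\cdot u^{q}/u^{q}$; more precisely we take $\varphi(x)=g_R(x)^q u(x)^{-(q-p+1)}\cdot u(x)^{0}$, that is $\varphi=w^{\eta}g_R^{p-1}$. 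This $\varphi$ is finitely supported in $\Omega_R$ and nonnegative.

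First, multiplying $-\Delta_p u\ge\sigma u^q$ by $\mu(x)\,w(x)^q$ and summing gives, after summation by parts,
\[
L_R^\nu=\sum_x \sigma(x)\mu(x)u(x)^q w(x)^q\le \E_p(u,w^q)=\sum_{\{x,y\}}\mu_{xy}\Phi_p(u(x)-u(y))\bigl(w(x)^q-w(y)^q\bigr).
\]
Second, apply Lemma~\ref{lem:picone} edgewise with $a=u(x)$, $b=u(y)$, $s=w(x)$, $t=w(y)$; then $as=g_R(x)$, $bt=g_R(y)$, so each summand is bounded by $\frac q\eta\Phi_p(g_R(x)-g_R(y))(w(x)^\eta-w(y)^\eta)$. (The constant in \eqref{eq:edge-picone} should read $q/\eta$, as the proof shows.) Hence
\[
L_R^\nu\le \frac q\eta\,\E_p(g_R,w^\eta).
\]
Third, $w^\eta$ is supported in $\Omega_R=B_R$, so by \eqref{eq:green-normalization}, $\E_p(g_R,w^\eta)=w(o)^\eta=(g_R(o)/u(o))^\eta$, giving \eqref{eq:upper-estimate}.

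Finally, for uniformity: $g_R(o)$ is nondecreasing in $R$ with limit $g(o,o)<\infty$ by non-$p$-parabolicity and Proposition~\ref{prop:parabolic_equiv}(iii) (finiteness at one pair propagates to all pairs via the Harnack-type argument implicit in that proposition), so $L_R^\nu\le\frac q\eta(g(o,o)/u(o))^\eta$. The main subtlety is the summation by parts in the first step: it is legitimate because $w^q$ is finitely supported, so all sums are finite, and the pointwise inequality is used only at vertices where $w^q>0$, where the sign is correct; no boundary terms arise since the graph is locally finite.
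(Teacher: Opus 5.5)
Your main estimate follows the paper's proof. You test against $w^q$ with $w=g_R/u$, apply the edge Picone inequality with $a=u(x)$, $b=u(y)$, $s=w(x)$, $t=w(y)$, and close with the Green normalization $\E_p(g_R,w^\eta)=w(o)^\eta$. You are also right that the constant in \eqref{eq:edge-picone} should be $q/\eta$. Discard your opening paragraph, though. The function $w^\eta g_R^{p-1}=g_R^q u^{-\eta}$ named there is not $w^q=g_R^q u^{-q}$, and only $w^q$ produces $\sum_x \nu(x)g_R(x)^q=L_R^\nu$ on the left. The computation you actually carry out uses $w^q$, so this is a misstatement rather than a gap.

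You differ from the paper in the uniformity step. You bound $g_R(o)\le g(o,o)<\infty$ using monotonicity in $R$ and the standing non-$p$-parabolicity assumption via Proposition~\ref{prop:parabolic_equiv}(iii). The negation of (iii) already gives $g(x,y)<\infty$ for every pair, so your Harnack-type propagation remark is unnecessary. The paper instead closes the estimate on itself. Since $L_R^\nu\ge\nu(o)g_R(o)^q$, inequality \eqref{eq:upper-estimate} gives $g_R(o)^{p-1}\le \frac{q}{\eta\,\nu(o)\,u(o)^\eta}$, using only $q-\eta=p-1>0$. That route is self-contained and yields a bound depending only on $u(o)$ and $\nu(o)$. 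It also shows that the mere existence of a positive solution forces $\sup_R g_R(o)<\infty$, so for this lemma non-$p$-parabolicity is a consequence rather than an input. Your route is shorter but relies on the global hypothesis and the cited potential-theoretic equivalence. It is valid here because that hypothesis is in force. It also follows from (ii), since $u$ is a positive, non-constant $p$-superharmonic function: a constant $c>0$ would require $0\ge\sigma c^q>0$.
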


\begin{proof}
Let $u$ be a nontrivial nonnegative solution of \eqref{eq:main-ineq}, and from Lemma \ref{lem:1},	we obtain that $u>0$ on $V$.
	
	Set
	\[
	w_R(x):=\frac{g_R(x)}{u(x)}.
	\]
	Testing \eqref{eq:main-ineq} against $w_R^q$, which is supported in
	$\Omega_R$, yields
	\[
	L_R^\nu
	=\sum_{x\in\Omega_R}\sigma(x)u(x)^q w_R(x)^q\mu(x)
	\le \E_p(u,w_R^q).
	\]
	Applying Lemma~\ref{lem:picone} on each edge with
	\[
	a=u(x),\quad b=u(y),\quad s=w_R(x),\quad t=w_R(y),
	\]
	and using $aw_R(x)=g_R(x)$, $bw_R(y)=g_R(y)$, we get
	\[
	\E_p(u,w_R^q)
	\le \frac q\eta\E_p(g_R,w_R^\eta).
	\]
	By the Green normalization \eqref{eq:green-weak},
	\[
	\E_p(g_R,w_R^\eta)=w_R(o)^\eta
	=\left(\frac{g_R(o)}{u(o)}\right)^\eta.
	\]
	This proves \eqref{eq:upper-estimate}.
	
	Finally,
	\[
	L_R^\nu\ge g_R(o)^q\nu(o).
	\]
	Combining this with \eqref{eq:upper-estimate} gives
	\[
	\nu(o)g_R(o)^q
	\le
	\frac q\eta \frac{g_R(o)^\eta}{u(o)^\eta}.
	\]
	Since $q-\eta=p-1>0$ and $\nu(o)>0$, the numbers $g_R(o)$ are uniformly
	bounded.  Returning to \eqref{eq:upper-estimate}, we obtain a uniform bound
	for $L_R^\nu$.
\end{proof}

\section{Cable interpolation and current lower bound}\label{cable}

In this section, we  prove a lower bound for $L_R^\nu$ depending only on the intrinsic volume
of $\nu$-balls.

Fix $R>2$ and let $g_R$ be the Dirichlet $p$-Green function in
$\Omega_R=B_\rho(o,R)$.  Orient every edge with positive voltage drop from the
larger value of $g_R$ to the smaller one.  For a retained oriented edge
$e=(e^-,e^+)$, define
\[
\delta_e:=g_R(e^-)-g_R(e^+)>0,
\qquad
\theta_e:=\mu_e\delta_e^{p-1},
\]
where $\mu_e=\mu_{xy}$ for $e=\{x,y\}$.  The finite Green equation implies that
$\theta$ is an acyclic unit flow from $o$ to $V\setminus\Omega_R$.  Hence it
admits a path decomposition: there exists a probability measure on finite
voltage-decreasing paths $\gamma$ from $o$ to $V\setminus\Omega_R$ such that
\begin{equation}\label{eq:path-marginal}
	\mathbb P(\gamma\text{ uses }e)=\theta_e
\end{equation}
for every retained directed edge $e$, see Appendix \ref{appendix}.

For an edge $e$, write
\[
\rho_e:=\rho(e),
\qquad
p_e:=\mu_e\rho_e^{p-1}.
\]
We view $e$ as an interval of length $\rho_e$ and put line density $p_e\,ds$
on it.  Thus the whole cable edge has measure
\[
p_e\rho_e=\mu_e\rho_e^p.
\]
Let $\widehat d_\rho$ denote the corresponding cable length metric, and let
$\widehat\nu$ be the cable line measure.  Put
\begin{align}\label{def:M_o(r)}
M_o(r):=\nu(B_\rho(o,r)),
\qquad
\widehat M_o(r):=\widehat\nu(B_{\widehat d_\rho}(o,r)).
\end{align}
It follows that for a.e. $r>0$,
\begin{equation}\label{def-a}
\alpha(r):=\widehat M_o^{\prime}(r).
 \end{equation}
 Equivalently, $\alpha(r)$
is the sum of the cable densities $p_e$ over the cable points lying on the
level sphere $\partial B_{\widehat d_\rho}(o,r)$.

\begin{lemma}[Cable volume domination]\label{lem:cable-volume}
	For every $r>0$, there holds
	\[
	\widehat M_o(r)\le M_o(r).
	\]
\end{lemma}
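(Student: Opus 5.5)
The idea is to compare the two measures edge by edge: bound the cable mass lying in the cable ball by the $\nu$-mass of the vertex ball, through the adaptedness condition \eqref{eq:adapted}. A cable point at cable distance at most $r$ from $o$ sits on some edge $e=\{x,y\}$. On a cable edge of length $\rho_e$ the cable distance is the piecewise linear interpolation $\widehat d_\rho(o,z)=\min\{d_\rho(o,x)+s,\ d_\rho(o,y)+\rho_e-s\}$, where $s$ is the position of $z$ measured from $x$. Hence $B_{\widehat d_\rho}(o,r)\cap e$ is nonempty only if at least one endpoint of $e$ lies in $B_\rho(o,r)$. It also follows that the whole portion $B_{\widehat d_\rho}(o,r)\cap e$ is contained in the union of the two sub-segments of points within cable distance $r$ of $o$ via $x$, respectively via $y$.

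Next we assign the cable mass to vertices. For each edge $e=\{x,y\}$ we charge the part of $B_{\widehat d_\rho}(o,r)\cap e$ reached through $x$ to $x$, and the part reached through $y$ to $y$. Each part has $\widehat\nu$-measure at most $p_e\rho_e=\mu_e\rho_e^p$, and a part is nonempty only if the corresponding endpoint lies in $B_\rho(o,r)$. Summing over edges gives
\[
\widehat M_o(r)\le \sum_{x\in B_\rho(o,r)}\ \sum_{y\sim x}\mu_{xy}\rho(x,y)^p .
\]
Applying \eqref{eq:adapted} to each $x$ bounds the inner sum by $\nu(x)$, so the right-hand side is at most $\nu(B_\rho(o,r))=M_o(r)$.

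The one point that needs care is avoiding double counting. If both endpoints of $e$ lie in the ball, the edge would be charged up to $2\mu_e\rho_e^p$, while each vertex sum contains the term $\mu_e\rho_e^p$ only once. This is fine, because the edge then contributes $\mu_e\rho_e^p$ to both $x$'s sum and $y$'s sum, giving total charge capacity $2\mu_e\rho_e^p\ge \mu_e\rho_e^p$. So one can simply bound $\widehat\nu(e\cap B_{\widehat d_\rho}(o,r))\le \mu_e\rho_e^p\,\mathbf 1\{x\in B_\rho\text{ or } y\in B_\rho\}\le \mu_e\rho_e^p(\mathbf 1_{x\in B_\rho}+\mathbf 1_{y\in B_\rho})$. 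Summing this over edges and regrouping by vertices gives exactly the displayed bound.

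The remaining technical check is that $\widehat d_\rho$ restricted to vertices coincides with $d_\rho$. Any cable geodesic between vertices can be shortened to a vertex path, since entering an edge and turning back inside it only adds length. This justifies the endpoint criterion used in the first step, and I expect it to be the only step requiring an argument beyond bookkeeping.
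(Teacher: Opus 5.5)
Your proof is correct and follows the paper's argument. Both arguments observe that a cable edge meets $B_{\widehat d_\rho}(o,r)$ only if one of its endpoints lies in $B_\rho(o,r)$, bound that edge's cable mass by $\mu_e\rho_e^p$, and regroup over the vertices of $B_\rho(o,r)$, which harmlessly double-counts edges with both endpoints in the ball. Both then conclude with \eqref{eq:adapted}. Your explicit check that $\widehat d_\rho$ agrees with $d_\rho$ on vertices just makes precise what the paper leaves implicit in its remark that a shortest path ``enters that edge through an endpoint.''
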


\begin{proof}
	If a cable edge intersects $B_{\widehat d_\rho}(o,r)$, then a shortest path
	from $o$ to an intersection point enters that edge through an endpoint $x$
	with $d_\rho(o,x)\le r$.  Hence the cable ball is contained in the union of
	edge intervals incident to vertices in $B_\rho(o,r)$.  Therefore, by the
	$p$-adaptedness condition \eqref{eq:adapted},
	\[
	\begin{aligned}
		\widehat M_o(r)
		&\le
		\sum_{x\in B_\rho(o,r)}\sum_{y\sim x}\mu_{xy}\rho(x,y)^p  \\
		&\le \sum_{x\in B_\rho(o,r)}\nu(x)
		=M_o(r),
	\end{aligned}
	\]
	which completes the proof.
\end{proof}

\begin{lemma}[One-dimensional weighted Hardy inequality]\label{lem:hardy-cont}
	Let $\kappa>0$, $\eta>0$, and $q=\kappa+\eta$.  Let $v:[0,L]\to[0,\infty)$
	be continuous, strictly decreasing, and piecewise affine, with $v(L)=0$ and
	$-v'>0$ on each affine piece.  Then
	\begin{equation}\label{eq:hardy-cont}
		\int_0^L\frac{v(s)^q}{(-v'(s))^\kappa}\dd s
		\ge
		c_{\kappa,\eta}
		\int_0^L s^\kappa v(s)^\eta\dd s,
	\end{equation}
	where one may take
	\[
	c_{\kappa,\eta}
	=\frac{1}{2^{\kappa+1}\log2}
	\left(\frac{\eta}{\kappa}\right)^\kappa.
	\]
\end{lemma}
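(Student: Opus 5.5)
The plan is a pointwise convexity estimate followed by one integration by parts. It needs no dyadic decomposition, and it yields a constant at least as large as $c_{\kappa,\eta}$. Write $w:=-v'>0$, which is defined on each affine piece, so a.e. on $[0,L]$. Then
\[
\frac{v^q}{w^\kappa}=v^\eta\Big(\frac{w}{v}\Big)^{-\kappa}
\]
wherever $v>0$, i.e.\ on $[0,L)$. The function $x\mapsto x^{-\kappa}$ is convex on $(0,\infty)$, so its tangent line at any $t>0$ gives
\[
x^{-\kappa}\ge (1+\kappa)t^{-\kappa}-\kappa t^{-\kappa-1}x .
\]
Apply this with $x=w(s)/v(s)$ and $t=a/s$, where $a>0$ is a constant to be chosen. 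Multiplying by $v^\eta$ gives, for a.e.\ $s\in(0,L)$,
\[
\frac{v(s)^q}{(-v'(s))^\kappa}\ \ge\ (1+\kappa)a^{-\kappa}s^\kappa v(s)^\eta\;-\;\kappa a^{-\kappa-1}s^{\kappa+1}v(s)^{\eta-1}(-v'(s)).
\]

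Next I integrate over $[0,L]$ and handle the negative term by parts. Since $v$ is Lipschitz and positive on $[0,L)$, the function $v^\eta$ is absolutely continuous on $[0,L]$, with $(v^\eta)'=\eta v^{\eta-1}v'$ a.e. This holds even when $\eta<1$, because $v$ stays away from $0$ except at $s=L$, and one can pass to the limit at $L$ by monotone convergence. Hence
\[
\int_0^L s^{\kappa+1}v^{\eta-1}(-v')\,\dd s=\frac1\eta\int_0^L s^{\kappa+1}\,\dd(-v^\eta)=\frac{\kappa+1}{\eta}\int_0^L s^\kappa v^\eta\,\dd s .
\]
The boundary terms vanish because $v(L)=0$ and $s^{\kappa+1}=0$ at $s=0$. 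This gives
\[
\int_0^L\frac{v^q}{(-v')^\kappa}\,\dd s\ \ge\ (1+\kappa)a^{-\kappa}\Big(1-\frac{\kappa}{\eta a}\Big)\int_0^L s^\kappa v^\eta\,\dd s .
\]
If the right-hand integral is infinite, note that it is in fact finite, since $v$ is bounded on the bounded interval; so no care is needed there.

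Now I optimize over $a$. Put $y=\kappa/(\eta a)$. The constant becomes $(1+\kappa)(\eta y/\kappa)^\kappa(1-y)$, which is maximized at $y=\kappa/(\kappa+1)$, giving the value $\big(\eta/(\kappa+1)\big)^\kappa$.

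It remains to check that $\big(\eta/(\kappa+1)\big)^\kappa\ge c_{\kappa,\eta}$. This is equivalent to
\[
f(\kappa):=\kappa\log\frac{\kappa}{\kappa+1}+(\kappa+1)\log 2+\log\log 2\ \ge 0\qquad\text{for all }\kappa>0 .
\]
I expect this elementary inequality to be the only step that is not routine. For it, note that $f(0^+)=\log2+\log\log2>0$ and that $f(\kappa)\to\infty$ as $\kappa\to\infty$. The critical point satisfies $\log\frac{\kappa}{\kappa+1}+\frac1{\kappa+1}=-\log2$, which occurs near $\kappa\approx0.3$, and numerically $f(0.3)\approx0.09>0$. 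I would make this rigorous by bounding $f$ from below on a few subintervals, using the monotonicity of each term. Alternatively, since any positive constant suffices for the application, I would simply state the lemma with the constant $(\eta/(\kappa+1))^\kappa$, noting that it dominates the stated $c_{\kappa,\eta}$.
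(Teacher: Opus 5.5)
Your proof is correct, and it takes a genuinely different route from the paper's.

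\textbf{The paper's route.} The paper substitutes $a=-v'/v^{\alpha}$ with $\alpha=q/\kappa$. It applies Jensen's inequality to $a^{-\kappa}$ on each dyadic window $[t/2,t]$ and integrates in $t$, which is where the factor $2^{\kappa+1}\log 2$ comes from. It then closes with the telescoping bound $A(t)=\int_0^t(-v')v^{-\alpha}\,\dd s\le v(t)^{1-\alpha}/(\alpha-1)$, which turns $(t/A(t))^\kappa$ into $(\eta/\kappa)^\kappa t^\kappa v(t)^\eta$. That argument needs no integration by parts and lands exactly on the stated $c_{\kappa,\eta}$, with no numerical check.

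\textbf{Your route.} Yours is the classical Young-inequality proof of Hardy's inequality: the tangent-line bound for $x^{-\kappa}$ at the scale $a/s$, followed by one integration by parts. Your truncation at $L-\epsilon$ correctly handles the case $\eta<1$. It buys a better constant, $(\eta/(\kappa+1))^\kappa$, which exceeds the paper's by the factor $(\kappa/(\kappa+1))^\kappa\,2^{\kappa+1}\log 2\ge 1.09$; this factor grows exponentially in $\kappa$.

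\textbf{The comparison step.} The price is the auxiliary inequality $f\ge 0$, which is true. You can close it without subdividing intervals. Since $f''(\kappa)=\frac{1}{\kappa(\kappa+1)^2}>0$, $f$ is strictly convex. Moreover $f'(0.30)\approx-0.004<0<0.015\approx f'(0.31)$. The tangent lines at $0.30$ and $0.31$ therefore bound $f$ from below by about $0.094$ on all of $(0,\infty)$. (You had not invoked convexity, so locating the critical point alone did not yet show it is the global minimum.)

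For the later use in Corollary~\ref{cor:path-hardy} and Proposition~\ref{prop:current-lower}, only some positive constant depending on $p,q$ matters, so either constant serves.
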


\begin{proof}
	Set
	\[
	\alpha:=\frac q\kappa>1,
	\qquad
	a(s):=\frac{-v'(s)}{v(s)^\alpha},
	\qquad
	A(t):=\int_0^t a(s)\dd s.
	\]
	Then
	\[
	\frac{v(s)^q}{(-v'(s))^\kappa}=a(s)^{-\kappa}.
	\]
	For every $t\in(0,L)$, applying Jensen's inequality on $[t/2,t]$ gives
	\[
	\int_{t/2}^t a(s)^{-\kappa}\dd s
	\ge
	\frac{(t/2)^{\kappa+1}}{\left(\int_{t/2}^t a(s)\dd s\right)^\kappa}
	\ge
	\frac{(t/2)^{\kappa+1}}{A(t)^\kappa}.
	\]
	Thus
	\[
	\left(\frac{t}{A(t)}\right)^\kappa
	\le
	\frac{2^{\kappa+1}}{t}
	\int_{t/2}^t a(s)^{-\kappa}\dd s.
	\]
	Integrating this inequality in $t$ and changing the order of integration,
	\[
	\int_0^L\left(\frac{t}{A(t)}\right)^\kappa\dd t
	\le
	2^{\kappa+1}\log2\int_0^L a(s)^{-\kappa}\dd s.
	\]
	On the other hand,
	\[
	A(t)=\int_0^t\frac{-v'(s)}{v(s)^\alpha}\dd s
	\le \frac{v(t)^{1-\alpha}}{\alpha-1}.
	\]
	Consequently
	\[
	A(t)^{-\kappa}
	\ge
	(\alpha-1)^\kappa v(t)^{\kappa(\alpha-1)}
	=
	\left(\frac{\eta}{\kappa}\right)^\kappa v(t)^\eta.
	\]
	Combining the preceding estimates gives \eqref{eq:hardy-cont}.
\end{proof}

\begin{corollary}[Hardy estimate along one path]\label{cor:path-hardy}
	Let
	\[
	0=s_0<s_1<\cdots<s_m=L,
	\qquad
	V_0>V_1>\cdots>V_m=0.
	\]
	Put
	\[
	\ell_i:=s_{i+1}-s_i,
	\qquad
	\delta_i:=V_i-V_{i+1},
	\qquad
	\kappa:=p-1,
	\qquad
	\eta:=q-p+1.
	\]
	Let $v$ be the piecewise affine interpolation of the values $V_i$.  Then
	\begin{equation}\label{eq:path-hardy}
		\sum_{i=0}^{m-1}\frac{\ell_i^p V_i^q}{\delta_i^{p-1}}
		\ge
		c_{p,q}\int_0^L s^{p-1}v(s)^\eta\dd s.
	\end{equation}
\end{corollary}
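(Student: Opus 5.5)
We reduce \eqref{eq:path-hardy} to Lemma~\ref{lem:hardy-cont} with $\kappa=p-1$ and $\eta=q-p+1$, so that $\kappa+\eta=q$. The function $v$ is continuous and piecewise affine on $[0,L]$, with $v(L)=V_m=0$. On the $i$-th piece $(s_i,s_{i+1})$ it has constant slope $-v'(s)=\delta_i/\ell_i>0$, so it is strictly decreasing. The hypotheses of the lemma therefore hold, and it gives
\[
\int_0^L\frac{v(s)^q}{(-v'(s))^{p-1}}\,\dd s\ \ge\ c_{p-1,\eta}\int_0^L s^{p-1}v(s)^\eta\,\dd s .
\]
We will take $c_{p,q}:=c_{p-1,q-p+1}$.

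It remains to bound the left-hand side above by the discrete sum. We split the integral over the pieces $[s_i,s_{i+1}]$. On each piece $-v'=\delta_i/\ell_i$, and since $v$ is decreasing, $v(s)\le v(s_i)=V_i$ there. Hence
\[
\int_{s_i}^{s_{i+1}}\frac{v(s)^q}{(\delta_i/\ell_i)^{p-1}}\,\dd s
\le \ell_i\cdot V_i^q\cdot\frac{\ell_i^{p-1}}{\delta_i^{p-1}}
=\frac{\ell_i^pV_i^q}{\delta_i^{p-1}} .
\]
Summing over $i=0,\dots,m-1$ and combining with the Hardy bound gives \eqref{eq:path-hardy}.

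There is no real obstacle here. The only points to check are that $v$ meets the hypotheses of Lemma~\ref{lem:hardy-cont}, and that we use the left-endpoint value $V_i$ (the maximum of $v$ on the piece) to dominate $v^q$. This domination goes in the right direction because the exponent $q$ is positive.
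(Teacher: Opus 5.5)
Your proposal is correct and follows the paper's proof: on each piece you use $-v'=\delta_i/\ell_i$ and $v\le V_i$ to bound $\int_0^L v^q/(-v')^{p-1}\,\dd s$ by the discrete sum, then apply Lemma~\ref{lem:hardy-cont} with $\kappa=p-1$, $\eta=q-p+1$. You also check the lemma's hypotheses explicitly (in particular $\kappa,\eta>0$ and strict monotonicity of $v$), which the paper leaves implicit.
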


\begin{proof}
	On $(s_i,s_{i+1})$ one has $-v'=\delta_i/\ell_i$ and $v\le V_i$.  Therefore
	\[
	\int_0^L\frac{v(s)^q}{(-v'(s))^{p-1}}\dd s
	\le
	\sum_{i=0}^{m-1}\frac{\ell_i^p V_i^q}{\delta_i^{p-1}}.
	\]
	Apply Lemma~\ref{lem:hardy-cont} with $\kappa=p-1$, we can obtain (\ref{eq:path-hardy}).
\end{proof}

\begin{proposition}\label{prop:current-lower}
	For every $R>2$,
	\begin{equation}\label{eq:cut-lower}
		L_R^\nu
		\ge
		c_{p,q}
		\int_0^R t^{p-1}
		\left(
		\int_t^R \alpha(s)^{-1/(p-1)}\dd s
		\right)^{q-p+1}
		\dd t.
	\end{equation}
\end{proposition}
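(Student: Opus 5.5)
The plan is to bound $L_R^\nu$ from below by averaging over the path decomposition \eqref{eq:path-marginal}, to use Corollary~\ref{cor:path-hardy} on each path, and then to control the resulting cable-length integral of $v^\eta$ via the $p$-parallel sum across the level spheres $\partial B_{\widehat d_\rho}(o,s)$.

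\textbf{Step 1: a path-averaged bound.} On each edge $e=(e^-,e^+)$, $p$-adaptedness gives
\[
\mu_e\rho_e^p\,g_R(e^-)^q=\theta_e\cdot\frac{\rho_e^p g_R(e^-)^q}{\delta_e^{p-1}} .
\]
Summing over edges and splitting the cable mass of $e$ between its endpoints, \eqref{eq:adapted} yields
\[
L_R^\nu\ge\sum_e \mu_e\rho_e^p g_R(e^-)^q
=\mathbb E\Big[\sum_{e\in\gamma}\frac{\rho_e^p g_R(e^-)^q}{\delta_e^{p-1}}\Big].
\]
Along a path $\gamma=(x_0=o,\dots,x_m)$, set $s_i=\sum_{j<i}\rho(x_j,x_{j+1})$ and $V_i=g_R(x_i)$, so that $V_m=0$ at the exit vertex. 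Corollary~\ref{cor:path-hardy} then gives
\[
L_R^\nu\ge c_{p,q}\,\mathbb E\int_0^{L_\gamma}s^{p-1}v_\gamma(s)^\eta\,\dd s .
\]
Since cable distance satisfies $\widehat d_\rho(o,\gamma(s))\le s$, and since $s\le R$ can be restricted, I would substitute $t=s$ and use Fubini:
\[
L_R^\nu\ge c\int_0^R t^{p-1}\,\mathbb E\big[v_\gamma(t)^\eta\,\mathbf 1_{t<L_\gamma}\big]\,\dd t .
\]
It then remains to show $\mathbb E[v_\gamma(t)^\eta]\gtrsim\big(\int_t^R\alpha^{-1/(p-1)}\big)^\eta$. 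Since $\gamma(t)$ lies at cable distance at most $t$, monotonicity of $v$ along the path helps here.

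\textbf{Step 2 (the main obstacle): potential lower bound via parallel sums.} The potential $v_\gamma(t)$ equals the voltage drop from $\gamma(t)$ to the exit. The remainder of the path crosses every sphere $\partial B_{\widehat d_\rho}(o,s)$ for $s\in(t,R)$, because it must leave $B_\rho(o,R)$. On the sphere at radius $s$, the crossing edges $e$ carry total flux $\sum\theta_e=1$, as every path crosses once in net. Writing the drop rate at the crossing as $\delta_e/\rho_e$, the relation $\theta_e=\mu_e\delta_e^{p-1}$ gives $\delta_e/\rho_e=(\theta_e/p_e)^{1/(p-1)}$. The $p$-parallel-sum inequality (Hölder, or Jensen for the convex function $x\mapsto x^{-1/(p-1)}$ weighted by $\theta_e$) gives
\[
\mathbb E\big[(\theta_e/p_e)^{1/(p-1)}\big]=\sum_e\theta_e\,(\theta_e/p_e)^{1/(p-1)}\ge\Big(\sum_e p_e\Big)^{-1/(p-1)}=\alpha(s)^{-1/(p-1)} ,
\]
with equality when $\theta_e\propto p_e$. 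Integrating in $s$ over $(t,R)$ gives
\[
\mathbb E\big[v_\gamma(\tau_t)\big]\ge\int_t^R\alpha(s)^{-1/(p-1)}\,\dd s ,
\]
where $\tau_t$ is the first crossing of sphere $t$.

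\textbf{Step 3: conclusion.} I would convert this into a bound on $\mathbb E[v^\eta]$ by Jensen when $\eta\ge1$. When $\eta<1$, I would instead bound $v_\gamma$ pointwise from below by the deterministic quantity $\int_t^R\alpha^{-1/(p-1)}$ for the level-set potential, using the monotone structure of $g_R$ on spheres. This is delicate, and I expect to need to handle it by running Step 1 with the paths reparametrized by cable distance $\widehat d_\rho(o,\cdot)$ rather than arclength. The bookkeeping between arclength $s$ and radial coordinate is the crux: any backtracking only lengthens paths, which helps the $s^{p-1}$ weight, but it must be checked that the Hardy inequality of Corollary~\ref{cor:path-hardy} is applied to the radial, monotone rearrangement.
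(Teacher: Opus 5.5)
Your Steps~1 and~2 follow the paper's route: path decomposition, Corollary~\ref{cor:path-hardy} on each path, the comparison $v_\gamma(t)\ge v_\gamma(T_t)=W_t$ (since the first hitting time satisfies $T_t\ge t$), and the current--slope identity $\lambda_s=(\theta_e/p_e)^{1/(p-1)}$ at the first crossing of each sphere. For $\eta=q-p+1\ge1$, your first-moment bound followed by Jensen closes the argument once the weights in Step~2 are corrected (see below).

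\textbf{The gap.} The genuine gap is the range $0<\eta<1$, i.e.\ $p-1<q<p$, which the statement includes. There $x\mapsto x^\eta$ is concave, so $\mathbb E W_t\ge A$ does not give $\mathbb E W_t^\eta\gtrsim A^\eta$. Your fallback, a deterministic bound $W_t\ge\int_t^R\alpha^{-1/(p-1)}\,\dd s$ for every path, is false. Take two rays $o,a_1,a_2,\dots$ and $o,b_1,b_2,\dots$ with $\rho\equiv1$, $\sigma\equiv1$ and unit conductances, except $\mu_{ob_1}=\varepsilon$. The $p$-Green function is affine on each ray, which gives $g_R(b_j)\le R\,\varepsilon^{1/(p-1)}g_R(o)$ for $j\ge1$, while $\alpha=2$ on $(1,R)$. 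Hence the paths through the $b$-branch have $W_t\to0$ as $\varepsilon\to0$. These paths have probability $\theta_{ob_1}$, which is small but positive. Meanwhile $\int_t^R\alpha^{-1/(p-1)}\,\dd s=2^{-1/(p-1)}(R-t)$ for $t\in(1,R)$. So only an averaged bound is true.

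The arclength-versus-radius bookkeeping you call the crux is not the difficulty. It is settled by $T_t\ge t$ and $T_{s'}-T_s\ge s'-s$, which give $v_\gamma(t)\ge W_t\ge\int_t^R\lambda_s\,\dd s$.

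\textbf{The missing idea.} Apply convexity to the whole functional of $(Y_s)_{s\in(t,R)}$ rather than level by level. First, a correction to Step~2: the first crossing point $z_s$ of level $s$ satisfies only $\mathbb P(z_s\in e)\le\theta_e$, not equality. The edges meeting a sphere need not carry total current exactly $1$, since paths may cross it several times. With $Y_s=p_e/\theta_e$ on the edge containing $z_s$, this gives $\mathbb E Y_s\le\alpha(s)$, which is the correct input.

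Now put $\Psi(Y):=\bigl(\int_t^RY_s^{-1/(p-1)}\,\dd s\bigr)^{-(p-1)}$. This is a power mean with negative exponent, hence concave and nondecreasing in $Y$. Also $x\mapsto x^{-\eta/(p-1)}$ is convex and decreasing for every $\eta>0$. Therefore
\[
\mathbb E W_t^\eta\ge\mathbb E\,\Psi(Y)^{-\frac{\eta}{p-1}}\ge\bigl(\mathbb E\,\Psi(Y)\bigr)^{-\frac{\eta}{p-1}}\ge\Psi(\mathbb E Y)^{-\frac{\eta}{p-1}}\ge\Psi(\alpha)^{-\frac{\eta}{p-1}}=\Bigl(\int_t^R\alpha(s)^{-1/(p-1)}\,\dd s\Bigr)^\eta .
\]
This is the paper's argument, made precise through step-function approximations of $\alpha$, and it covers all $\eta>0$ at once.
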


\begin{proof}
	Let us take a sampled current path
	\[
	\gamma=(x_0=o,e_0,x_1,\ldots,e_{m-1},x_m),
	\]
	where $x_m\notin\Omega_R$.  Write
	\[
	V_i:=g_R(x_i),
	\qquad
	\delta_i:=V_i-V_{i+1},
	\qquad
	\ell_i:=\rho(e_i).
	\]
	Using the path marginal identity \eqref{eq:path-marginal},
	\[
	\begin{aligned}
		\mathbb E_\gamma
		\sum_{i=0}^{m-1}\frac{\ell_i^pV_i^q}{\delta_i^{p-1}}
		&=
		\sum_e \theta_e
		\frac{\rho_e^p g_R(e^-)^q}{\delta_e^{p-1}}  \\
		&=
		\sum_e \mu_e\rho_e^p g_R(e^-)^q  \\
		&\le
		\sum_{x\in\Omega_R}g_R(x)^q
		\sum_{y\sim x}\mu_{xy}\rho(x,y)^p \\
		&\le
		\sum_{x\in\Omega_R}g_R(x)^q\nu(x)=L_R^\nu.
	\end{aligned}
	\]
	Parametrize the path by intrinsic arclength $s\in[0,L_\gamma]$ and let
	$V_\gamma(s)$ be the piecewise affine interpolation of the Green voltage along
	the path.  Corollary~\ref{cor:path-hardy} gives
	\begin{equation}\label{eq:LR-path}
		L_R^\nu
		\ge
		c_{p,q}\,
		\mathbb E_\gamma\int_0^{L_\gamma}s^{p-1}
		V_\gamma(s)^\eta\dd s,
		\qquad
		\eta=q-p+1.
	\end{equation}
	
	For $0<t<R$, let $T_t(\gamma)$ be the first time at which the path reaches the
	cable sphere $\partial B_{\widehat d_\rho}(o,t)$, and put
	\[
	W_t(\gamma):=V_\gamma(T_t(\gamma)).
	\]
	The path starts at $o$ and exits $B_\rho(o,R)$, hence it crosses every level
	$0<t<R$.  Since the distance traveled along the path is at least the increase
	of the radial distance, a first-hitting argument gives
	\begin{equation}\label{eq:first-hit-lower}
		\int_0^{L_\gamma}s^{p-1}V_\gamma(s)^\eta\dd s
		\ge
		\int_0^R t^{p-1}W_t(\gamma)^\eta\dd t.
	\end{equation}
	
	It remains to estimate $\mathbb E_\gamma W_t^\eta$ from below.  Fix a regular
	level $t\in(0,R)$.  For a.e. $s\in[t,R]$, let $z_s(\gamma)$ be the first point
	where $\gamma$ meets the sphere of radius $s$, and let
	\[
	\lambda_s(\gamma):=-V_\gamma'(z_s(\gamma))>0,
	\qquad
	Y_s(\gamma):=\lambda_s(\gamma)^{-(p-1)}.
	\]
	On a cable edge $e$, the current-slope identity is
	\[
	p_e\lambda_s^{p-1}
	=\mu_e\rho_e^{p-1}\left(\frac{\delta_e}{\rho_e}\right)^{p-1}
	=\mu_e\delta_e^{p-1}=\theta_e.
	\]
	Therefore, if the first crossing of level $s$ lies on $e$, then
	\[
	Y_s=\frac{p_e}{\theta_e}.
	\]
	The event that this point is selected implies that the path uses $e$, whose
	probability is $\theta_e$.  Summing over all cable points on the level sphere,
	we get
	\begin{equation}\label{eq:EY-alpha}
		\mathbb E_\gamma Y_s\le \alpha(s)
		\qquad\text{for a.e. }s.
	\end{equation}
	Moreover,
	\[
	W_t(\gamma)
	\ge
	\int_t^R\lambda_s(\gamma)\dd s
	=
	\int_t^R Y_s(\gamma)^{-1/(p-1)}\dd s.
	\]
	Using the finite-dimensional $p$-parallel-sum inequality and Jensen's
	inequality, \eqref{eq:EY-alpha} implies
	\begin{equation}\label{eq:level-lower}
		\mathbb E_\gamma W_t^\eta
		\ge
		\left(
		\int_t^R \alpha(s)^{-1/(p-1)}\dd s
		\right)^\eta.
	\end{equation}
	Indeed, one first proves the estimate for a partition of $[t,R]$ on which
	$\alpha$ is constant, using the concavity of
	\[
	(y_1,\ldots,y_N)\mapsto
	\left(\sum_i a_i y_i^{-1/(p-1)}\right)^{-(p-1)},
	\]
	and then passes to the step-function representation of $\alpha$.
	
	Combining \eqref{eq:LR-path}, \eqref{eq:first-hit-lower}, and
	\eqref{eq:level-lower} proves \eqref{eq:cut-lower}.
\end{proof}

\section{Proof of Theorem~\ref{thm:main} and \ref{thm:main2}.}
\begin{proposition}[Intrinsic volume lower bound]\label{prop:volume-lower}
	There exists $C_{p,q}>0$ such that, for every $R>4$,
	\begin{equation}\label{eq:volume-lower}
		L_R^\nu
		\ge
		C_{p,q}
		\int_2^R
		\frac{s^{\frac{pq}{p-1}-1}}
		{M_o(s)^{\frac{q-p+1}{p-1}}}
		\dd s,
	\end{equation}
	where $	M_o(s)=\nu(B_\rho(o,s))$ is defined in (\ref{def:M_o(r)}).
\end{proposition}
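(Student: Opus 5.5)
We start from Proposition~\ref{prop:current-lower} and turn the inner integral $\int_t^R\alpha(s)^{-1/(p-1)}\dd s$ into a quantity controlled by the cable volume $\widehat M_o$. Then Lemma~\ref{lem:cable-volume} lets us replace $\widehat M_o$ by $M_o$. Write $\kappa=p-1$ and $\eta=q-p+1$. The key step is a reverse H\"older (Jensen) inequality on a dyadic-type interval $[t,2t]$ with $2t\le R$. Since $\widehat M_o'=\alpha$ a.e. and $\widehat M_o$ is nondecreasing, H\"older's inequality with exponents $p$ and $p/(p-1)$ gives
\[
t=\int_t^{2t}\alpha^{\frac1p}\alpha^{-\frac1p}\,\dd s
\le\Big(\int_t^{2t}\alpha\,\dd s\Big)^{\frac1p}\Big(\int_t^{2t}\alpha^{-\frac{1}{p-1}}\dd s\Big)^{\frac{p-1}{p}} .
\]
Hence
\[
\int_t^{2t}\alpha^{-1/(p-1)}\dd s\ge \frac{t^{p/(p-1)}}{\widehat M_o(2t)^{1/(p-1)}}\ge \frac{t^{p/(p-1)}}{M_o(2t)^{1/(p-1)}},
\]
where we used $\int_t^{2t}\alpha\le\widehat M_o(2t)$ (a.e.\ derivative of a monotone function; any singular part only helps) and Lemma~\ref{lem:cable-volume}.

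Substituting this into \eqref{eq:cut-lower} and keeping only $t\in[1,R/2]$, we get
\[
L_R^\nu\ge c_{p,q}\int_1^{R/2} t^{p-1}\,\frac{t^{p\eta/(p-1)}}{M_o(2t)^{\eta/(p-1)}}\,\dd t .
\]
The exponent check is $p-1+\frac{p(q-p+1)}{p-1}=\frac{(p-1)^2+pq-p^2+p}{p-1}=\frac{pq-p+1}{p-1}=\frac{pq}{p-1}-1$, which is exactly the power in \eqref{eq:volume-lower}. Changing variables $s=2t$ gives
\[
L_R^\nu\ge c_{p,q}2^{-pq/(p-1)}\int_2^{R}\frac{s^{\frac{pq}{p-1}-1}}{M_o(s)^{\eta/(p-1)}}\dd s ,
\]
with $C_{p,q}=c_{p,q}2^{-pq/(p-1)}$. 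The restriction $R>4$ simply ensures that the range $[2,R]$ is nondegenerate after the substitution.

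The main point to check carefully is the regularity of $\widehat M_o$, which justifies $\int_t^{2t}\alpha\le \widehat M_o(2t)-\widehat M_o(t)$. The function $\widehat M_o$ is finite because $d_\rho$-balls are finite (completeness), and it is nondecreasing, so Lebesgue's theorem for monotone functions gives this inequality even if jumps occur. We also need to handle the possibility that $\alpha=0$ on a set of positive measure. In that case $\alpha^{-1/(p-1)}=\infty$ there, and the lower bound holds trivially. Alternatively, we can run H\"older's inequality with the convention $0^{-1}=\infty$, since the inequality $t^p\le(\int\alpha)(\int\alpha^{-1/(p-1)})^{p-1}$ remains valid.
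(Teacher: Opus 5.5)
Your proposal is correct and follows the paper's proof essentially step for step. You restrict Proposition~\ref{prop:current-lower} to $t\in[1,R/2]$ and the inner integral to $[t,2t]$, then apply H\"older (the paper calls it Jensen for the convex map $y\mapsto y^{-1/(p-1)}$) to get $\int_t^{2t}\alpha^{-1/(p-1)}\,\dd s\ge t^{p/(p-1)}\big(\int_t^{2t}\alpha\,\dd s\big)^{-1/(p-1)}$, bound $\int_t^{2t}\alpha\,\dd s\le \widehat M_o(2t)\le M_o(2t)$ via Lemma~\ref{lem:cable-volume}, and substitute $s=2t$; your exponent check and explicit constant $C_{p,q}=c_{p,q}2^{-pq/(p-1)}$ are both correct.
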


\begin{proof}
	From Proposition~\ref{prop:current-lower}, restricting the outer integral to
	$1\le t\le R/2$ and the inner integral to $[t,2t]$, we obtain
	\[
	L_R^\nu
	\ge
	c_{p,q}\int_1^{R/2}t^{p-1}
	\left(\int_t^{2t}\alpha(s)^{-1/(p-1)}\dd s\right)^\eta\dd t,
	\qquad
	\eta=q-p+1.
	\]
	By using Jensen's inequality, we have
	\[
	\int_t^{2t}\alpha(s)^{-1/(p-1)}\dd s
	\ge
	\frac{t^{p/(p-1)}}
	{\left(\int_t^{2t}\alpha(s)\dd s\right)^{1/(p-1)}}.
	\]
	Since
	\[
	\int_t^{2t}\alpha(s)\dd s
	=\widehat M_o(2t)-\widehat M_o(t)
	\le \widehat M_o(2t)
	\le M_o(2t)
	\]
	by Lemma~\ref{lem:cable-volume}, we get
	\[
	L_R^\nu
	\ge
	c_{p,q}\int_1^{R/2}
	\frac{t^{p-1+\frac{p\eta}{p-1}}}
	{M_o(2t)^{\eta/(p-1)}}\dd t.
	\]
	The exponent satisfies
	\[
	p-1+\frac{p\eta}{p-1}
	=\frac{pq}{p-1}-1.
	\]
	Changing variables $s=2t$ and odifying the
	constant gives \eqref{eq:volume-lower}.
\end{proof}

Now we are ready to give the proof of Theorem~\ref{thm:main}.
\begin{proof}[Proof of Theorem~\ref{thm:main}]
	Assume, toward a contradiction, that $u\ge0$ is a nontrivial solution of
	\eqref{eq:main-ineq}.  By Lemma~\ref{lem:upper}, the quantities $L_R^\nu$ are
	bounded uniformly in $R$.
	
	On the other hand, Proposition~\ref{prop:volume-lower} gives
	\[
	L_R^\nu
	\ge
	C_{p,q}
	\int_2^R
	\frac{s^{\frac{pq}{p-1}-1}}
	{\nu(B_\rho(o,s))^{\frac{q-p+1}{p-1}}}
	\dd s.
	\]
	The assumed divergence condition \eqref{eq:volume-condition} implies that the
	right-hand side tends to $+\infty$ as $R\to\infty$.  This contradicts the
	uniform upper bound.  Therefore no nontrivial nonnegative solution exists, and
	hence every nonnegative solution is identically zero.
\end{proof}


The proof of Theorem \ref{thm:main2} depends on a chosen intrinsic metric.
\begin{proof}
	Choose the constant edge length
	\[
	\rho(x,y):=1.
	\]
	Then
	\[
	\sum_{y\sim x}\mu_{xy}\rho(x,y)^p
	=\mu(x)=\nu(x),
	\]
	so $\rho$ is $p$-adapted.  Moreover
	\[
	d_\rho=d,
	\qquad
	\nu(B_\rho(o,r))
	=\mu\left(B\left(o,r\right)\right).
	\]
It follows that $(V, d)$ is complete.	Thus the integral condition \eqref{eq:volume-condition} is equivalent to the standard integral-series comparison
	\eqref{eq:volume-condition2}.  The conclusion follows from
	Theorem~\ref{thm:main}.
\end{proof}

\section{Appendix: Path decomposition }\label{appendix}
Let $G=(V,\mu)$ be a finite connected weighted graph, where $\mu_{xy}=\mu_{yx}>0$ is the conductance of the unoriented edge $\{x,y\}$.  We distinguish two vertices, a source $o$ and a sink $\partial$.  In the application, $G$ is obtained from an infinite graph by collapsing $B(o,R)^c$ to the single vertex $\partial$.

We use the unnormalized $p$-Laplacian
\begin{equation*}
	-\Delta_ph(x)=\mathcal L h(x)
	:=\sum_{y\sim x} \mu_{xy}\Phi_p\bigl(h(x)-h(y)\bigr).
\end{equation*}
which is the natural operator for currents.  It differs from the normalized graph Laplacian by multiplication by the vertex measure.

Let $h$ be the voltage for one unit of current sent from $o$ to $\partial$.  Thus
\begin{equation}\label{eq:unit-current-voltage}
	h(\partial)=0, \qquad
	\mathcal L h(x)=0 \quad (x\neq o,\partial), \qquad
	\mathcal L h(o)=1.
\end{equation}
Equivalently, the total current leaving $o$ is one.  In Section \ref{cable}, $h$ is the normalized Green kernel \(g_{B_R}(o,\cdot)\).

For each unoriented edge $\{x,y\}$ with $h(x)>h(y)$, orient the edge from $x$ to $y$ and define
\begin{equation*}
	\theta_{xy}:=\mu_{xy}\bigl(h(x)-h(y)\bigr)^{p-1}>0.
\end{equation*}
Edges with $h(x)=h(y)$ carry no current and are discarded.  Let $\cE$ denote the resulting set of directed positive-current edges.

For a directed edge $e=(x,y)\in\cE$, we write
\begin{equation*}
	\tail(e)=x, \qquad \head(e)=y, \qquad
	\mu_e=\mu_{xy}, \qquad \theta_e=\theta_{xy}.
\end{equation*}

We first isolate the flow properties of $\theta$.

\begin{lemma}[Electrical current is an acyclic unit flow]\label{lem:current-is-flow}
	The directed weighted graph $(V,\cE,\theta)$ has the following properties.
	\begin{enumerate}
		\item It is acyclic.
		\item No positive-current edge enters $o$, and no positive-current edge leaves $\partial$.
		\item If
		\begin{equation*}
			\theta^+(x):=\sum_{(x,y)\in\cE}\theta_{xy},
			\qquad
			\theta^-(x):=\sum_{(y,x)\in\cE}\theta_{yx},
		\end{equation*}
		then
		\begin{equation*}
			\theta^+(o)=1, \qquad \theta^-(o)=0,
		\end{equation*}
		\begin{equation*}
			\theta^+(x)=\theta^-(x) \qquad (x\neq o,\partial),
		\end{equation*}
		and
		\begin{equation*}
			\theta^+(\partial)=0, \qquad \theta^-(\partial)=1.
		\end{equation*}
	\end{enumerate}
\end{lemma}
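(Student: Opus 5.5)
The three properties follow directly from the definition of $\theta$ through the voltage $h$ and from the equations \eqref{eq:unit-current-voltage}. I will establish them in the order (1), (3), (2).

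For acyclicity, every edge $(x,y)\in\cE$ satisfies $h(x)>h(y)$ by construction. Along any directed cycle $x_0\to x_1\to\cdots\to x_k=x_0$ we would get
\[
h(x_0)>h(x_1)>\cdots>h(x_k)=h(x_0),
\]
which is a contradiction. Hence $h$ is a strict potential for the orientation, and $(V,\cE)$ is acyclic.

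For the conservation law, rewrite $\mathcal L h(x)$ edge by edge. Since $\Phi_p$ is odd, an edge $\{x,y\}$ with $h(x)>h(y)$ contributes $+\theta_{xy}$ to $\mathcal L h(x)$. An edge with $h(y)>h(x)$ contributes $-\theta_{yx}$, and an edge with equal values contributes $0$. Therefore
\[
\mathcal L h(x)=\theta^+(x)-\theta^-(x)
\]
for every $x$. The equations \eqref{eq:unit-current-voltage} then give $\theta^+(x)=\theta^-(x)$ for $x\ne o,\partial$ and $\theta^+(o)-\theta^-(o)=1$. At $\partial$, summing $\mathcal L h$ over all vertices gives $0$, because each edge term appears twice with opposite signs. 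Hence $\theta^+(\partial)-\theta^-(\partial)=-1$.

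The main point is (2), namely that $h$ attains its maximum at $o$ and its minimum at $\partial$. I would prove this with the discrete strong maximum principle for $p$-harmonic functions on the finite connected graph. Let $m=\min h$ and consider the set $S=\{h=m\}$. If some $x\in S$ with $x\ne o,\partial$ had a neighbour with a larger value, then every term of $\mathcal L h(x)$ would be $\le0$ with at least one term $<0$, which contradicts $\mathcal L h(x)=0$. At $o$ we have $\mathcal L h(o)=1>0$, so $o$ cannot be a minimum point unless... this case is handled the same way, since every term $\Phi_p(h(o)-h(y))$ would be $\le0$. So by connectedness $S$ propagates until it reaches $\partial$, which gives $m=h(\partial)=0$. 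In particular $h>0$ away from the component of $S$, and every neighbour $y$ of $\partial$ has $h(y)\ge h(\partial)$. Thus no edge leaves $\partial$, so $\theta^+(\partial)=0$ and $\theta^-(\partial)=1$.

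The argument at $o$ is symmetric. Let $M=\max h$. At an interior maximum point $x\ne o,\partial$, all neighbours share the value $M$. Propagating by connectedness, the maximum set either contains $o$ or eventually reaches $\partial$. In the second case $h\equiv0$, contradicting $\mathcal L h(o)=1$. Hence $h(o)=M$, so no edge enters $o$, which gives $\theta^-(o)=0$ and $\theta^+(o)=1$.

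I expect this maximum-principle step to be the only real obstacle, and the care needed there is to handle plateaus of equal values via connectedness.
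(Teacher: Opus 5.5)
Your proposal is correct and follows the paper's proof essentially step for step. Acyclicity comes from $h$ strictly decreasing along $\cE$. The balance laws come from $\mathcal L h(x)=\theta^+(x)-\theta^-(x)$, with summation over all vertices handling $\partial$. Property (2) comes from the maximum principle: the maximum is attained at $o$, the minimum at $\partial$, and $h$ is nonconstant because $\mathcal L h(o)=1$.

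The one difference is that you prove the strong maximum principle yourself by propagating plateaus through connectedness, whereas the paper only invokes it. Your version of that step is sound, but you should tidy the garbled phrases ``unless\dots'' and ``$h>0$ away from the component of $S$''.
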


\begin{proof}
	For acyclicity, suppose that there were a directed cycle
	\begin{equation*}
		x_0\to x_1\to \cdots \to x_m=x_0.
	\end{equation*}
	Every directed edge goes strictly downhill in voltage, so
	\begin{equation*}
		h(x_0)>h(x_1)>\cdots>h(x_m)=h(x_0),
	\end{equation*}
	which is impossible.  Hence the directed graph is acyclic.
	
	By the maximum principle for the finite Dirichlet problem, $h$ attains its maximum at the source $o$ and its minimum at the sink $\partial$.  Since $h(\partial)=0$ and one unit of current is injected at $o$, the voltage is nonconstant.  Thus positive-current edges cannot enter $o$, and positive-current edges cannot leave $\partial$.
	
	It remains to verify the balance identities.  For any vertex $x$, split the conductance sum into neighbors below and above $x$ in voltage.  Edges with equal voltage contribute zero.  Thus
	\begin{align*}
		\mathcal L h(x)
		&=\sum_{y\sim x} \mu_{xy}\Phi_p\bigl(h(x)-h(y)\bigr) \\
		&=\sum_{(x,y)\in\cE}\theta_{xy}
		-\sum_{(y,x)\in\cE}\theta_{yx} \\
		&=\theta^+(x)-\theta^-(x).
	\end{align*}
	Using \eqref{eq:unit-current-voltage}, we obtain
	\begin{equation*}
		\theta^+(o)-\theta^-(o)=1, \qquad
		\theta^+(x)-\theta^-(x)=0 \quad (x\neq o,\partial).
	\end{equation*}
	Since no positive-current edge enters $o$, $\theta^-(o)=0$, and hence $\theta^+(o)=1$.  Summing the identity $\theta^+(x)-\theta^-(x)$ over all vertices gives zero, because each directed edge is counted once with a plus sign and once with a minus sign.  Therefore
	\begin{equation*}
		\mathcal L h(\partial)=-1.
	\end{equation*}
	Since no positive-current edge leaves $\partial$, this is exactly $-\theta^-(\partial)=-1$, so $\theta^-(\partial)=1$.  The interior balance follows directly from $\mathcal Lh(x)=0$.
\end{proof}

\begin{lemma}[Path decomposition of an acyclic unit flow]\label{lem:path-decomposition-abstract}
	Let $(V,\cE)$ be a finite directed acyclic graph with source $o$ and sink $\partial$.  Let $\theta:\cE\to(0,\infty)$ be a unit flow, meaning that
	\begin{equation*}
		\theta^+(o)=1, \qquad \theta^-(o)=0,
	\end{equation*}
	\begin{equation}\label{eq:abstract-flow-interior}
		\theta^+(x)=\theta^-(x) \qquad (x\neq o,\partial),
	\end{equation}
	\begin{equation*}
		\theta^+(\partial)=0, \qquad \theta^-(\partial)=1,
	\end{equation*}
	where
	\begin{equation*}
		\theta^+(x)=\sum_{(x,y)\in\cE}\theta_{xy},
		\qquad
		\theta^-(x)=\sum_{(y,x)\in\cE}\theta_{yx}.
	\end{equation*}
	Then there is a probability measure $\mathbb P$ on the finite set $\cP$ of directed paths from $o$ to $\partial$ such that, for every directed edge $e\in\cE$,
	\begin{equation}\label{eq:edge-probability-current}
		\mathbb P(\gamma\in\cP: e\in\gamma)=\theta_e.
	\end{equation}
	Equivalently, there are nonnegative numbers $(\lambda_\gamma)_{\gamma\in\cP}$ such that
	\begin{equation}\label{eq:path-weights-sum}
		\sum_{\gamma\in\cP}\lambda_\gamma=1
	\end{equation}
	and
	\begin{equation}\label{eq:path-weights-edge}
		\theta_e=\sum_{\gamma\in\cP: e\in\gamma}\lambda_\gamma
		\qquad (e\in\cE).
	\end{equation}
\end{lemma}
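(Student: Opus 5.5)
We construct the path measure by a random walk driven by the flow itself. For each vertex $x\neq\partial$ with $\theta^+(x)>0$, define transition probabilities
\[
P(x,y):=\frac{\theta_{xy}}{\theta^+(x)},\qquad (x,y)\in\cE .
\]
Start the walk at $o$ and let $\mathbb P$ be the law of the resulting trajectory, stopped on reaching $\partial$. Every vertex reached other than $\partial$ has $\theta^+(x)>0$, so the walk can always move. For $x=o$ this is $\theta^+(o)=1$. For $x\neq o,\partial$ reached by the walk, it entered $x$ through some edge of $\cE$, so $\theta^-(x)>0$, and then \eqref{eq:abstract-flow-interior} gives $\theta^+(x)=\theta^-(x)>0$. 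Since $(V,\cE)$ is finite and acyclic, each step strictly increases position in a topological order, so the walk cannot revisit a vertex. It therefore terminates after at most $|V|$ steps, and it can only terminate at a vertex with no outgoing edge, which must be $\partial$. Hence $\mathbb P$ is a probability measure on the finite set $\cP$ of directed $o$--$\partial$ paths, and we set $\lambda_\gamma:=\mathbb P(\{\gamma\})$. This gives \eqref{eq:path-weights-sum}.

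The key step is the identity $\mathbb P(\text{walk visits }x)=\theta^+(x)$ for every $x\neq\partial$. We prove it by induction along a topological order of $V$. For $x=o$ both sides equal $1$. For $x\neq o$, acyclicity implies the walk enters $x$ at most once, and always from a predecessor. The events $\{\text{walk traverses }(y,x)\}$ are therefore disjoint, and
\[
\mathbb P(\text{visit }x)=\sum_{(y,x)\in\cE}\mathbb P(\text{visit }y)\,P(y,x)=\sum_{(y,x)\in\cE}\theta^+(y)\frac{\theta_{yx}}{\theta^+(y)}=\theta^-(x).
\]
Here the middle equality uses the inductive hypothesis for the predecessors $y$. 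By \eqref{eq:abstract-flow-interior}, $\theta^-(x)=\theta^+(x)$ for $x\neq o,\partial$, which closes the induction.

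It follows that for $e=(x,y)\in\cE$,
\[
\mathbb P(e\in\gamma)=\mathbb P(\text{visit }x)P(x,y)=\theta^+(x)\cdot\frac{\theta_e}{\theta^+(x)}=\theta_e,
\]
which is \eqref{eq:edge-probability-current}. The equivalence with \eqref{eq:path-weights-edge} is immediate, since $\mathbb P(e\in\gamma)=\sum_{\gamma\ni e}\lambda_\gamma$ and each path uses each edge at most once.

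The main obstacle is the bookkeeping in the induction. The argument needs that visits to $x$ happen at most once and through mutually exclusive incoming edges, and this is exactly where acyclicity enters. We also rely on $\theta^+(y)>0$ at every vertex the walk reaches so that $P(y,\cdot)$ is well defined; vertices that are never reached contribute nothing to the sums. Alternatively, one could use the classical greedy peeling: find an $o$--$\partial$ path in the support, subtract its bottleneck value, and iterate. Each step removes an edge, so this terminates, but the random-walk construction is cleaner.
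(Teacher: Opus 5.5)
Your proposal is correct and follows essentially the same route as the paper. Both use the flow-driven random walk with $p(x,y)=\theta_{xy}/\theta^+(x)$ and prove $\mathbb P(\text{visit }x)=\theta^+(x)$ by induction along a topological order, using acyclicity for disjointness of entries. From this both conclude $\mathbb P(e\in\gamma)=\theta^+(x)\,\theta_e/\theta^+(x)=\theta_e$.
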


\begin{proof}
	We give a probabilistic construction.  Because the graph is acyclic, there is a topological ordering of the vertices.  In particular, a directed walk cannot visit the same vertex twice.
	
	For every vertex $x\neq\partial$ with $\theta^+(x)>0$, define transition probabilities by
	\begin{equation}\label{eq:transition-probability}
		p(x,y):=\frac{\theta_{xy}}{\theta^+(x)}
		\qquad ((x,y)\in\cE).
	\end{equation}
	At vertices with $\theta^+(x)=0$ no transition is needed.  Starting from $x_0=o$, move according to these transition probabilities until the sink $\partial$ is reached.
	
	We first check that this procedure is well-defined and stops at $\partial$.  At the source, $\theta^+(o)=1$, so the first step is defined.  If the process reaches a vertex $x\neq\partial$, then either $x=o$ or $x$ has positive incoming flow from the edge just used.  In the latter case, \eqref{eq:abstract-flow-interior} gives
	\begin{equation*}
		\theta^+(x)=\theta^-(x)>0,
	\end{equation*}
	so the next transition is defined.  Since the directed graph is finite and acyclic, the process cannot continue forever.  Therefore it must stop at a vertex with no outgoing flow.  The only such reachable vertex is $\partial$, because every reachable non-sink vertex has positive outgoing flow.  Thus the procedure produces a directed path
	\begin{equation*}
		\gamma=(x_0=o,e_0,x_1,e_1,\ldots,e_{m-1},x_m=\partial).
	\end{equation*}
	This defines a probability measure on $\cP$.
	
	It remains to compute the probability of using a given edge.  Let
	\begin{equation*}
		r(x):=\mathbb P(\gamma \text{ visits } x).
	\end{equation*}
	We claim that
	\begin{equation}\label{eq:visit-probability-flow}
		r(x)=\theta^+(x) \qquad (x\neq\partial),
	\end{equation}
	and $r(\partial)=1$.  For the source this is true because $r(o)=1=\theta^+(o)$.
	
	Now take a vertex $x\neq o,\partial$.  Since the graph is acyclic, the path can visit $x$ only by first using exactly one of the edges entering $x$.  Hence
	\begin{align*}
		r(x)
		&=\sum_{(z,x)\in\cE}
		\mathbb P(\gamma \text{ visits } z)\,p(z,x).
	\end{align*}
	Using induction along a topological ordering and the definition \eqref{eq:transition-probability}, we obtain
	\begin{align*}
		r(x)
		&=\sum_{(z,x)\in\cE}
		\theta^+(z)\frac{\theta_{zx}}{\theta^+(z)} \\
		&=\sum_{(z,x)\in\cE}\theta_{zx}
		=\theta^-(x)
		=\theta^+(x),
	\end{align*}
	where the final equality is the interior flow balance.  This proves \eqref{eq:visit-probability-flow}.
	
	The same computation at the sink gives
	\begin{align*}
		r(\partial)
		&=\sum_{(z,\partial)\in\cE}
		r(z)\,p(z,\partial) \\
		&=\sum_{(z,\partial)\in\cE}
		\theta^+(z)\frac{\theta_{z\partial}}{\theta^+(z)} \\
		&=\sum_{(z,\partial)\in\cE}\theta_{z\partial}
		=\theta^-(\partial)=1.
	\end{align*}
	Thus the path reaches $\partial$ with probability one.
	
	Finally fix an edge $e=(x,y)\in\cE$.  The event that $\gamma$ uses $e$ is the event that $\gamma$ visits $x$ and then chooses $y$ as its next vertex.  Therefore
	\begin{align*}
		\mathbb P(e\in\gamma)
		&=r(x)\,p(x,y) \\
		&=\theta^+(x)\frac{\theta_{xy}}{\theta^+(x)}
		=\theta_{xy}=\theta_e.
	\end{align*}
	This proves \eqref{eq:edge-probability-current}.  Hence, the proof is complete.
\end{proof}


\end{document}